\newtheorem{algorithm}{Algorithm}[section]
\newtheorem{example}{Example}[section]
\newtheorem{condition}{Condition}
\newcommand{\ten}[1]{\mathcal{#1}}  % Tensor in Caligraphical font
\title{A constructive arbitrary-degree Kronecker product decomposition of tensors}
\author{Kim Batselier \and Ngai Wong\thanks{Department of Electrical and Electronic Engineering, The University of Hong Kong, Hong Kong}}
\begin{document}

\maketitle

\begin{abstract}
We propose the tensor Kronecker product singular value decomposition~(TKPSVD) that decomposes a real $k$-way tensor $\ten{A}$ into a linear combination of tensor Kronecker products with an arbitrary number of $d$ factors $\ten{A} = \sum_{j=1}^R \sigma_j\, \ten{A}^{(d)}_j \otimes \cdots \otimes \ten{A}^{(1)}_j$. We generalize the matrix Kronecker product to tensors such that each factor $\ten{A}^{(i)}_j$ in the TKPSVD is a $k$-way tensor. The algorithm relies on reshaping and permuting the original tensor into a $d$-way tensor, after which a polyadic decomposition with orthogonal rank-1 terms is computed. We prove that for many different structured tensors, the Kronecker product factors $\ten{A}^{(1)}_j,\ldots,\ten{A}^{(d)}_j$ are guaranteed to inherit this structure. In addition, we introduce the new notion of general symmetric tensors, which includes many different structures such as symmetric, persymmetric, centrosymmetric, Toeplitz and Hankel tensors. 
%We also provide two application examples demonstrating the versatility of this KP-decomposed form.
\end{abstract}

\begin{keywords} 
Kronecker product, structured tensors, tensor decomposition, TTr1SVD, generalized symmetric tensors, Toeplitz tensor, Hankel tensor
\end{keywords}

\begin{AMS}
15A69, 15B05, 15A18, 15A23, 15B57
\end{AMS}

\pagestyle{myheadings}
\thispagestyle{plain}
\markboth{KIM BATSELIER, NGAI WONG}{TKPSVD}

\section{Introduction}
Consider the singular value decomposition (SVD) of the following $16 \times 9$ matrix $\tilde{A}$
\begin{equation}
%%\begin{pmatrix}
%%    0.137&    0.538&    1.024&    0.538&    0.479&   -0.238&    1.024&   -0.238&   -2.021\\
%%   -0.143&   -0.650&    0.032&   -0.368&   -0.284&    0.127&   -0.777&   -1.021&   -0.834\\
%%   -0.377&    0.544&   -0.931&   -0.608&    0.476&   -0.551&    0.104&    0.800&   -0.251\\
%%    1.292&   -0.044&    0.887&    1.102&   -0.986&   -0.778&    0.352&    0.609&   -1.197\\
%%   -0.143&   -0.368&   -0.777&   -0.650 &  -0.284&   -1.021&    0.032&    0.127&   -0.834\\
%%   -0.796&    0.089&    1.027&    0.089&    1.062&   -1.234&    1.027&   -1.234&   -1.229\\
%%    0.767&    0.468&    0.231&   -0.238&   -0.287&   -0.669&   -0.782&    0.510&   -1.510\\
%%   -0.182&   -1.266&   -0.823&    1.365&   -0.690&   1.923 &  -0.276 &   0.798 &  -0.610\\
%%   -0.377&   -0.608&    0.104&    0.544&    0.476&    0.800&  -0.931 &  -0.551 &  -0.251\\
%%    0.767&   -0.238&   -0.782&    0.468&   -0.287&    0.5100&    0.231&   -0.669&   -1.510\\
%%    0.124&    0.319&    1.593&    0.319&   -0.166&   -0.325&    1.593&   -0.325&   -0.565\\
%%   -0.549&    1.520&   -0.725&    0.104&   -0.032&   -1.682&   -0.122&   -1.437&   -0.217\\
%%    1.292&    1.102&    0.352&   -0.044&   -0.986&    0.609&    0.887&   -0.778&   -1.197\\
%%   -0.182&    1.365&  -0.276 &  -1.266 &  -0.690 &   0.798 &  -0.823 &   1.923 &  -0.610\\
%%   -0.549&    0.104&  -0.122 &   1.520 &  -0.032 &  -1.437 &  -0.725 &  -1.682 &  -0.217\\
%%    0.427&    0.966&    0.958&    0.966&   -1.458&    0.137&    0.958&    0.137&   -0.090
%%\end{pmatrix}}
\footnotesize{
 \begin{pmatrix}
   1.108&  -0.267&  -1.192&  -0.267&  -1.192&  -1.281&  -1.192&  -1.281&   1.102\\
   0.417&  -1.487&  -0.004&  -1.487&  -0.004&  -1.418&  -0.004&  -1.418&  -0.228\\
  -0.127&   1.100&  -1.461&   1.100&  -1.461&   0.729&  -1.461&   0.729&   0.940\\
  -0.748&  -0.243&   0.387&  -0.243&   0.387&  -1.241&   0.387&  -1.241&  -1.853\\
   0.417&  -1.487&  -0.004&  -1.487&  -0.004&  -1.418&  -0.004&  -1.418&  -0.228\\
  -0.127&   1.100&  -1.461&   1.100&  -1.461&   0.729&  -1.461&   0.729&   0.940\\
  -0.748&  -0.243&   0.387&  -0.243&   0.387&  -1.241&   0.387&  -1.241&  -1.853\\
  -0.267&  -1.192&  -1.281&  -1.192&  -1.281&   1.102&  -1.281&   1.102&  -0.474\\
  -0.127&   1.100&  -1.461&   1.100&  -1.461&   0.729&  -1.461&   0.729&   0.940\\
  -0.748&  -0.243&   0.387&  -0.243&   0.387&  -1.241&   0.387&  -1.241&  -1.853\\
  -0.267&  -1.192&  -1.281&  -1.192&  -1.281&   1.102&  -1.281&   1.102&  -0.474\\
  -1.487&  -0.004&  -1.418&  -0.004&  -1.418&  -0.228&  -1.418&  -0.228&  -1.031\\
  -0.748&  -0.243&   0.387&  -0.243&   0.387&  -1.241&   0.387&  -1.241&  -1.853\\
  -0.267&  -1.192&  -1.281&  -1.192&  -1.281&   1.102&  -1.281&   1.102&  -0.474\\
  -1.487&  -0.004&  -1.418&  -0.004&  -1.418&  -0.228&  -1.418&  -0.228&  -1.031\\
   1.100&  -1.461&   0.729&  -1.461&   0.729&   0.940&   0.729&   0.940&  -0.380
 \end{pmatrix}}
 \label{eqn:exampleA}
\end{equation}
into a $16 \times 16$ orthogonal matrix $U$, a $16 \times 9$ diagonal matrix $S$ and a $9\times 9$ orthogonal matrix $V$. The distinct entries of $\tilde{A}$ were drawn from a standard normal distribution. Note that $\tilde{A}$ has only 5 distinct columns and 7 distinct rows, limiting the rank to maximally 5, and has no apparent structure such as a symmetry or displacement structure (Hankel or Toeplitz matrix). Reshaping the first left and right singular vectors into a $4\times 4$ and $3 \times 3$ matrix respectively results in the following Hankel matrices
$$
\begin{pmatrix}
%0.402 & 0.049 & -0.067 & 0.279\\
%0.049 & 0.436 & 0.135 & -0.241\\
%-0.067& 0.135 & 0.349 & 0.281\\
%0.279 &-0.241 & 0.281 & 0.215 
  -0.100&   0.194&  -0.375&   0.245\\
   0.194&  -0.375&   0.245&  -0.244\\
  -0.375&   0.245&  -0.244&  -0.177\\
   0.245&  -0.244&  -0.177&   0.106
\end{pmatrix},
\begin{pmatrix}
%0.109 & 0.240 & 0.361\\
%0.240 &-0.022 &-0.443\\
%0.361 &-0.443 &-0.469
   0.036&  -0.158&   0.442\\
  -0.158&   0.442&  -0.373\\
   0.442&  -0.373&  -0.290
\end{pmatrix}.
$$
In fact, it turns out that all singular vectors of $\tilde{A}$ can be reshaped into square Hankel matrices. In the same vein, other matrices can be easily constructed such that their reshaped singular vectors are square structured matrices (symmetric, Toeplitz, centrosymmetric, persymmetric,...). And as we will demonstrate in this article, the same can even be done for $k$-way tensors. Now what is going on here? Why are all singular vectors so highly structured? This is explained in this article, where we introduce the tensor-based Kronecker product (KP) singular value decomposition (TKPSVD). The TKPSVD decomposes an arbitrary real $k$-way tensor $\ten{A} \in \mathbb{R}^{n_1 \times n_2 \times \cdots \times n_k}$ as
\begin{equation}
\ten{A} \;=\; \sum_{j=1}^R \sigma_j\, \ten{A}^{(d)}_j \otimes \cdots \otimes \ten{A}^{(1)}_j,
\label{eq:tkpsvd}
\end{equation}
where $\otimes$ denotes the tensor Kronecker product, defined in Section \ref{sec:Kronecker}, and the tensors $\ten{A}^{(i)}_j \in \mathbb{R}^{n^{(i)}_1 \times \cdots \times n^{(i)}_k}$ satisfy
\begin{align}
||\ten{A}^{(i)}_j ||_F \;=\; 1 \quad \textrm{with} \quad 
\prod_{i=1}^d n^{(i)}_r=n_r \quad (r \in \{1,\ldots,k\}).
\label{eq:dimconstraint}
\end{align}
It turns out that the TKPSVD is key in explaining the structured singular vectors of the matrix $\tilde{A}$. In Section \ref{sec:tkpsvd}, we show that computing the SVD of \eqref{eqn:exampleA} is an intermediate step in the computation of the TKPSVD of a $12 \times 12$ Hankel matrix into the Kronecker product of a $4\times 4$ with a $3\times 3$ matrix.\\
\\
The Kronecker rank \cite{Hackbusch03hierarchicalkronecker}, which is generally hard to compute, is defined as the minimal $R$ required in \eqref{eq:tkpsvd} in order for the equality to hold. If for any $r \in \{1,\ldots,k\}$ we have that $n\triangleq n^{(1)}_r=n^{(2)}_r=\cdots=n^{(d)}_r$, then $n_r=n^d$. For this reason, we call the number of factors $d$ in \eqref{eq:tkpsvd} the degree of the decomposition. The user of the TKPSVD algorithm is completely free to choose the degree $d$ and the dimensions $n^{(i)}_r$ of each of the KP factors, as long as they satisfy \eqref{eq:dimconstraint}. In addition to the development of the TKPSVD algorithm, another major contribution of this article, shown for the first time in the literature, is the proof that by our proposed algorithm we will have the following favorable structure-preserving properties when all $\ten{A}^{(i)}_j$ and $\ten{A}$ are cubical:
$$
\textrm{if $\ten{A}$ is } \left\{
\begin{array}{l}
 \textrm{symmetric}\\
 \textrm{persymmetric}\\
\textrm{centrosymmetric}\\ 
 \textrm{Toeplitz}\\
 \textrm{Hankel}\\
 \textrm{general symmetric}
% \quad \quad \vdots \\
% \textrm{diagonal}
% \textrm{banded}\\
\end{array}\right\}
% \xrightarrow{\textrm{then each } \ten{A}^{(i)}_j \textrm{ is}}
\textrm{then each } \ten{A}^{(i)}_j \textrm{ is}
\left\{
\begin{array}{l}
 \textrm{(skew-)symmetric}\\
 \textrm{(skew-)persymmetric}\\
 \textrm{(skew-)centrosymmetric}\\ 
 \textrm{Toeplitz}\\
 \textrm{Hankel}\\
 \textrm{general (skew-)symmetric}\\
% \quad \quad  \vdots \\
%  \textrm{diagonal}
%  \textrm{banded}\\
\end{array}\right\}.
$$
The fact that each of the factors $\ten{A}^{(i)}_j$ inherits the structure of $\ten{A}$ is not trivial. In providing this proof a very natural generalization of symmetric tensors is introduced, which we name \textit{general symmetric tensors}. In addition, Toeplitz and Hankel tensors are also generalized into what we call \textit{shifted-index structures}, which are special cases of general symmetries. In fact, when $\ten{A}$ is general symmetric, then all its cubical factors $\ten{A}^{(i)}_j$ will also be general symmetric. Another interesting feature of the TKPSVD algorithm is that if the summation in \eqref{eq:tkpsvd} is limited to the first $r$ terms, then the relative approximation error in the Frobenius norm is given by
\begin{equation}
\frac{||A-\sum_{j=1}^r \sigma_j\, A^{(d)}_j \otimes \cdots \otimes A^{(1)}_j ||_F } {||A||_F} = \frac{\sqrt{\sigma_{r+1}^2 + \cdots + \sigma_R^2}} {\sqrt{\sigma_{1}^2 + \cdots + \sigma_R^2}}.
%\frac{||A-\sum_{j=1}^k \sigma_j\, A^{(d)}_j \otimes \cdots \otimes A^{1j} ||_F}{||A||_F} = \frac{\sqrt{\sigma_{k+1}^2 + \cdots + \sigma_R^2}}{\sqrt{\sigma_{1}^2 + \cdots + \sigma_R^2}}.
\label{eq:approxerror}
\end{equation}
Equation \eqref{eq:approxerror} has the computational advantage that the relative approximation error can be easily obtained from the $\sigma_j$'s without having to explicitly construct the approximant. 

\subsection{Context}
The TKPSVD is directly inspired by the work of Van Loan and Pitsianis~\cite{Loan93approximationwith}. In their paper, they solve the problem of finding matrices $B,C$ such that $||A-B\otimes C||_F$ is minimized. The globally minimizing matrices $B,C$ turn out to be the singular vectors corresponding with the largest singular value of a particular permutation of $A$. In \cite{Loan200085}, the full SVD of the permuted $A$ is considered and the corresponding decomposition of $A$ into a linear combination of Kronecker products is called the Kronecker product singular value decomposition (KPSVD). Applications of the $d=2,k=2$ KPSVD approximation in image restoration are described in \cite{Nagy2000,Perrone2003}, whereas extensions to the $d=3,k=2$ case using the higher order singular value decomposition (HOSVD), also for imaging, are described in \cite{Kilmer2006,Elden2014}.

The decomposition in this paper is a direct generalization of the KPSVD to an arbitrary number of KP factors $d$ and to arbitrary $k$-way tensors. In the TKPSVD case, the SVD is replaced by either a canonical polyadic decomposition (CPD)~\cite{candecomp,harshman1970fpp} with orthogonal factor matrices, the HOSVD~\cite{Lathauwer:HOSVD} or the tensor-train rank-1 SVD (TTr1SVD)~\cite{ttr1svd15}. The TKPSVD reduces to the KPSVD for the case $d=2$ and $k=2$. Contrary to previous work in the literature, we are not interested in minimizing $|| \ten{A}-\sum_{j=1}^r \ten{A}^{(d)}_j\otimes  \cdots \otimes  \ten{A}^{(1)}_j||_F$. Instead, we are interested in a full decomposition such that any structure of $\ten{A}$ is also present in the KP factors. Explicit decomposition algorithms for when $d\geq 4$ and $k \geq 3$ are not found in the literature. To this end, our proposed TKPSVD algorithm readily works for any degree $d$ and any tensor order $k$, does not require any a priori knowledge of the number of KP terms, and preserves general symmetry in the KP factors $\ten{A}^{(i)}_j$. Furthermore, a Matlab/Octave implementation that uses the TTr1SVD and works for any arbitrary degree $d$ and order $k$ can be freely downloaded from \url{https://github.com/kbatseli/TKPSVD}. In brief, the contributions of this article are:
\begin{itemize}
\item an explicit formulation of the generalization of the KPSVD algorithm for $d>2$ and $k > 2$ is presented for the first time in the literature,
\item a new notion of general symmetric tensors, which describes many tensor structures, is introduced,
\item we prove that for a general symmetric tensor $\ten{A}$ all KP factors $\ten{A}^{(i)}_j$ in \eqref{eq:tkpsvd} are guaranteed to have the same general symmetry under a particular condition.
\end{itemize}

The outline of this article is as follows. In Section \ref{sec:notations}, we introduce some basic tensor concepts and notations. In Section \ref{sec:Kronecker}, we generalize the matrix Kronecker product into the tensor Kronecker product and present some of its properties. In Section \ref{sec:tkpsvd}, we first derive the theorem that underlies the TKPSVD and then present the TKPSVD algorithm for both general and diagonal tensors. In Section \ref{sec:gensym}, we introduce the framework of \textit{general symmetry} that describes many different structured tensors such as symmetry, centrosymmetry, Hankel, Toeplitz, etc.. Preservation of general symmetry in the KP factors when the original tensor is general symmetric is proven in Section \ref{sec:strucproofs}. Numerical experiments that demonstrate different aspects of the TKPSVD are presented in Section \ref{sec:applications}, after which conclusions follow.

\section{Tensor basics and notation}
\label{sec:notations}
We only consider real matrices and tensors in this paper. Scalars are denoted by greek letters $(\alpha, \beta,...)$, vectors by lowercase letters $(a,b,...)$, matrices by uppercase letters $(A,B,...)$ and higher-order tensors by uppercase caligraphic letters $(\ten{A},\ten{B},...)$. The notation $(\cdot)^T$ denotes the transpose of either a vector or a matrix. A $k$th-order or $k$-way tensor is a multi-way array $\ten{A} \in \mathbb{R}^{n_1\times n_2 \times \cdots \times n_k}$. A tensor is cubical if all dimensions are equal, e.g., $n_1=n_2=\cdots=n_k$. 

Entries of tensors are always denoted with square brackets around the indices. This enables an easy way of representing the grouping of indices. Suppose for example that $\ten{A}$ is a 4-way tensor with entries $\ten{A}_{[i_1][i_2][i_3][i_4]}$. To improve readability we do not write the square brackets when all indices are considered separate, therefore $\ten{A}_{i_1i_2i_3i_4}\triangleq \ten{A}_{[i_1][i_2][i_3][i_4]} $. A $3$-way tensor can now be formed by grouping, for example, the first two indices together. The entries of this $3$-way tensor are then denoted by $\ten{A}_{[i_1i_2][i_3][i_4]}$, where the grouped index $[i_1i_2]$ is easily converted into the linear index $i_1+n_1(i_2-1)$. Grouping the indices into the $[i_1]$ and $[i_2i_3i_4]$ results in a $n_1 \times n_2n_3n_4$ matrix with entries $\ten{A}_{[i_1][i_2i_3i_4]}$. The column index $[i_2i_3i_4]$ is equivalent to the linear index $i_2+n_2 (i_3-1)+n_2 n_3 (i_4-1)$. A very special case of grouping indices is obtained when all indices are grouped together. The resulting vector is then called the vectorization of $\ten{A}$, denoted $\textrm{vec}(\ten{A})$, with entries $\ten{A}_{[i_1i_2i_3i_4]}$.

The $r$-mode product of a tensor $\ten{A} \in \mathbb{R}^{n_1\times n_2 \times \cdots \times n_k}$ with a matrix $U \in \mathbb{R}^{p \times n_r}$ is defined by
$$
(\ten{A} \times_r U)_{i_1\cdots i_{k-1} j i_{k+1} \cdots i_d} \;=\; \sum_{i_r=1}^{n_r} U_{j i_r} \ten{A}_{i_1 \cdots i_r\cdots i_d},
$$
so that $\ten{A}{\times_r} U \in \mathbb{R}^{n_1 \times \cdots \times n_{r-1} \times p \times n_{r+1} \times \cdots \times n_d}$. The multiplication of a $k$-way tensor $\ten{A}$ along all its modes with matrices $P_1,\ldots,P_k$
$$
\ten{B} \;=\; \ten{A} \times_1 P_1 \times_2 P_2 \times_3 \cdots {\times_k} P_k,
$$
can be rewritten as the following linear system
\begin{equation}
\textrm{vec}(\ten{B}) \;=\; (P_d \otimes \cdots \otimes P_2 \otimes P_1)\, \textrm{vec}(\ten{A}),
\label{eq:modeprodvec}
\end{equation}
where $\otimes$ is the conventional matrix Kronecker product~\cite{regalia1989}, which is defined and generalized to the tensor case in Section \ref{sec:Kronecker}. The inner product between two tensors $\ten{A},\ten{B} \in \mathbb{R}^{n_1\times \cdots \times n_d}$ is defined as
$$
\langle \ten{A},\ten{B} \rangle \;=\; \sum_{i_1,i_2,\cdots,i_d}\,\ten{A}_{i_1i_2 \cdots i_d}\,\ten{B}_{i_1i_2 \cdots i_d} \;=\; \textrm{vec}(\ten{A})^T \, \textrm{vec}(\ten{B}).
$$
Two tensors $\ten{A},\ten{B}$ are orthogonal with respect to one another when $\langle \ten{A},\ten{B} \rangle=0$. The norm of a tensor is taken to be the Frobenius norm $||\ten{A}||_F=\langle \ten{A},\ten{A} \rangle^{1/2}$. A $k$-way rank-1 tensor $\ten{A}\in \mathbb{R}^{n_1 \times \cdots \times n_k}$ is per definition the outer product~\cite{tensorreview}, denoted by $\circ$, of $k$ vectors $a^{(i)} \in \mathbb{R}^{n_i} \, (i \in \{1,\ldots,k\})$ such that
\begin{equation}
\ten{A}=a^{(1)} \circ a^{(2)} \circ \cdots \circ a^{(k)} \quad \textrm{with} \quad \ten{A}_{i_1 i_2 \cdots i_k}\;=\; a^{(1)}_{i_1}\,a^{(2)}_{i_2}\,\cdots \,a^{(k)}_{i_k}.
\label{def:outerprod}
\end{equation}
Any real $k$-way tensor $\ten{A}$ can always be written as a linear combination of rank-1 terms
\begin{equation}
\ten{A} \;=\; \sum_{j=1}^R \sigma_j\, a^{(1)}_{j}\circ a^{(2)}_{j}\circ \cdots \circ a^{(k)}_{j},
\label{eq:cpd}
\end{equation}
where $\sigma_j \in \mathbb{R}$ and all the $a^{(i)}_j$ vectors satisfy $||a^{(i)}_j||_2=1$. Such a decomposition is called a polyadic decomposition (PD) of the tensor $\ten{A}$. When the equality in \eqref{eq:cpd} holds with a minimal number of terms $R$, then the PD is called canonical (CPD)~\cite{candecomp,harshman1970fpp}. The number $R$ in the CPD is called the \textit{tensor rank}. Unlike the SVD, each of the rank-1 terms in the (C)PD is not necessarily orthogonal.

The Tucker decomposition~\cite{tuckerreview,tucker64extension} writes a $k$-way tensor ${\ten{A}}$ as the following multilinear transformation of a core tensor $\ten{S} \in \mathbb{R}^{r_1 \times r_2 \times \cdots \times r_k}$ by factor matrices $U^{(i)} \in \mathbb{R}^{n_i \times r_i}$
\begin{equation}
{\ten{A}}\;=\; \ten{S} \times_1 U^{(1)} \times_2 U^{(2)} \times_3 \cdots \times_k U^{(k)},
\label{eq:tucker}
\end{equation}
which can also be written as \eqref{eq:cpd} where each $\sigma_j$ is now an entry of the core tensor $\ten{S}$. Each rank-1 term of the Tucker decomposition is then given by
$$
\ten{S}_{i_1i_2\cdots i_k} \; U^{(1)}(:,i_1) \circ U^{(2)}(:,i_2) \circ \cdots \circ U^{(k)}(:,i_k),
$$
where we use Matlab colon notation to denote columns of the $U^{(i)}$ factor matrices. The minimal size of the core tensor $\ten{S}$ such that the equality in \eqref{eq:tucker} holds is called the \textit{multilinear rank}. The higher-order SVD (HOSVD)~\cite{Lathauwer:HOSVD} is a Tucker decomposition where the core tensor $\ten{S}$ has the same dimensions as the original tensor $\ten{A}$ and with the additional property that the factor matrices $U^{(i)}$ and the slices of $\ten{S}$ in the same mode are orthogonal. This implies that each rank-1 term is orthogonal to all other rank-1 terms in the HOSVD, which has the immediate advantage that the approximation error can be determined as in \eqref{eq:approxerror}.

The PARATREE/TTr1SVD decomposition~\cite{ttr1svd15,Koivunen2008} is another decomposition of a $k$-way tensor into orthogonal rank-1 terms as described by \eqref{eq:cpd}. The total number of terms in the TTr1SVD is upperbounded by $R= \prod_{r=0}^{k-2} \textrm{min}(n_{r+1},\prod_{i=r+2}^k n_i)$ and therefore depends on the ordering of the indices. This decomposition is computed from repeated reshapings and SVD computations and is unique for a fixed order of indices. Note that although each of the rank-1 terms is orthogonal with respect to all others, unlike the HOSVD, the factor matrices $U^{(i)}$ are not orthogonal. In addition, the scalar $\sigma_j$ coefficients obtained in the PARATREE/TTR1SVD decomposition are guaranteed to be nonnegative. This has the advantage that one can plot these $\sigma_j$ coefficients in descending order and inspect the relative weight of each of the rank-1 terms in a very straightforward manner, just like one can do with the singular values of a matrix.

%The Kronecker delta $\delta(k,j)$ is a function of two variables $k,j \in \mathbb{N}$, such that
%$$
%\delta(k,j) \;=\; \left\{ \begin{array}{lc}
% 0 & \textrm{if } k \neq j, \\
% 1 & \textrm{if } k = j.
%\end{array} \right.
%$$
\section{Tensor Kronecker product}
\label{sec:Kronecker}
\subsection{Definition}
The definition of the Kronecker product for two matrices is well-known. If $B \in \mathbb{R}^{m_1 \times m_2}$ and $C \in \mathbb{R}^{n_1 \times n_2}$, then their Kronecker product $B \otimes C$ is an $m_1 \times m_2$ block matrix whose $(i_3,i_4)$th block is the $n_1 \times n_2$ matrix $B_{i_3i_4}C$
\begin{equation}
\begin{pmatrix}
B_{11} & \cdots & B_{1n_1}\\
\vdots & \ddots & \vdots \\
B_{m_11} & \cdots &B_{m_1n_1}\\
\end{pmatrix} \otimes C \;=\;
\begin{pmatrix}
B_{11}C & \cdots & B_{1n_1}C\\
\vdots & \ddots & \vdots \\
B_{m_11}C & \cdots & B_{m_1n_1}C\\
\end{pmatrix}.
\label{def:kron}
\end{equation}
Generalizing this definition to the Kronecker product of $k$-way tensors is quite straightforward, although not in the form as given by \eqref{def:kron}. Before giving the definition of the tensor Kronecker product, we first investigate how the entries of the matrix Kronecker product are described by the indices of the original matrices. The following lemma is easily verified.\\
\begin{lemma}
\label{lemma:kronentry}
If the entries of the matrices $B \in \mathbb{R}^{m_1 \times m_2},C\in \mathbb{R}^{n_1 \times n_2}$ are denoted $B_{i_3i_4}$ and $C_{i_1i_2}$ respectively, then the entries of their Kronecker product $A = B \otimes C$ are described by $A_{[i_1i_3][i_2i_4]} = B_{i_3i_4}\,C_{i_1i_2}$ for all possible values of $i_1,i_2,i_3,i_4$. 
\end{lemma}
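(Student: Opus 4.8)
The plan is to verify the claimed index formula directly from the block structure in \eqref{def:kron}, by carefully tracking how the block row/column indices and the within-block row/column indices assemble into the grouped indices of $A = B \otimes C$. First I would fix the convention: in the $m_1 \times m_2$ block partition of $A$, the $(i_3,i_4)$th block equals $B_{i_3i_4}\,C$, and within that block the $(i_1,i_2)$th entry is $B_{i_3i_4}\,C_{i_1i_2}$. So the entry of $A$ sitting in block-row $i_3$, block-column $i_4$, local row $i_1$, local column $i_2$ is exactly $B_{i_3i_4}\,C_{i_1i_2}$.

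Next I would translate these four indices into the ordinary (ungrouped) row and column indices of $A$. Since each block has $n_1$ rows, the global row index of the entry is $i_1 + n_1(i_3 - 1)$, and similarly the global column index is $i_2 + n_2(i_4 - 1)$. Now I would compare these with the linear-index conventions established in Section \ref{sec:notations}: the grouped index $[i_1 i_3]$ corresponds to the linear index $i_1 + n_1(i_3 - 1)$, and $[i_2 i_4]$ corresponds to $i_2 + n_2(i_4 - 1)$. These match the global row and column indices exactly, so $A_{[i_1 i_3][i_2 i_4]}$ denotes precisely the entry above, which equals $B_{i_3 i_4}\,C_{i_1 i_2}$. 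Since $i_1, i_2, i_3, i_4$ were arbitrary (ranging over $1 \le i_1 \le n_1$, $1 \le i_2 \le n_2$, $1 \le i_3 \le m_1$, $1 \le i_4 \le m_2$), this establishes the claim for all admissible index values.

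This proof is essentially bookkeeping, so I do not expect a genuine obstacle; the only point that needs care is consistency of the index-ordering convention — namely that in the grouped index $[i_1 i_3]$ the factor index $i_1$ (from $C$, the trailing Kronecker factor) is the fast-varying one and $i_3$ (from $B$) is the slow-varying one, which is exactly the ordering forced by the block layout in \eqref{def:kron} and by the linear-index convention fixed in Section \ref{sec:notations}. I would make this explicit so that the reader sees why the formula reads $A_{[i_1 i_3][i_2 i_4]}$ rather than $A_{[i_3 i_1][i_4 i_2]}$; once that convention is pinned down, the verification is immediate.
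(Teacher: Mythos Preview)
Your proposal is correct and is exactly the direct verification the paper has in mind; the paper does not actually write out a proof but merely states that the lemma ``is easily verified,'' and your argument supplies precisely those routine bookkeeping details (matching block/within-block indices to the grouped linear indices from Section~\ref{sec:notations}).
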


Remember that the grouped indices $[i_1i_3],[i_2i_4]$ are easily converted into the linear row index $i_1+n_1(i_3-1)$ and the linear column index $i_2+n_2(i_4-1)$ respectively. The definition of the tensor Kronecker product follows from the generalization of Lemma \ref{lemma:kronentry} to multiple indices.
\begin{definition}
\label{def:tkron}
Let $\ten{B} \in \mathbb{R}^{n_1 \times n_2 \times \cdots \times n_k},\ten{C} \in \mathbb{R}^{m_1 \times m_2 \times \cdots \times m_k}$ be two $k$-way tensors with entries denoted by $\ten{B}_{i_{k+1}\cdots i_{2k}},\ten{C}_{i_1\cdots i_k}$ respectively. The tensor Kronecker product $\ten{A} = \ten{B} \otimes \ten{C} \in  \mathbb{R}^{n_1m_1 \times n_2m_2 \times \cdots \times n_km_k}$ is then defined from 
$$
\ten{A}_{[i_1i_{k+1}][i_2i_{k+2}]\cdots [i_ki_{2k}]} = \ten{B}_{i_{k+1}\cdots i_{2k}}\,\ten{C}_{i_1\cdots i_k},
$$
which needs to hold for all possible values of $i_1,\ldots,i_{2k}$.
\end{definition}

Although the tensor Kronecker product is a very straightforward generalization of the matrix Kronecker product, we failed to find any reference to it in the literature. One possible implementation of the tensor Kronecker product would be to use Definition \ref{def:tkron} over all possible values of the indices $i_1,\ldots,i_{2k}$ but this would not be very efficient. Instead, one can use an existing implementation of the vector/matrix Kronecker product (`kron.m' in Matlab) on the vectorized tensors $\textrm{vec}(\ten{B}),\textrm{vec}(\ten{C})$. Indeed, the entries of $c\triangleq \textrm{vec}(\ten{B}) \otimes \textrm{vec}(\ten{C})$ are indexed by the single grouped index $[i_1 \cdots i_k i_{k+1}\cdots i_{2k}]$. One can then reshape and permute the entries of $c$ such that the desired $[i_1i_{k+1}][i_2i_{k+2}]\cdots [i_ki_{2k}]$ index structure is obtained. This is how the tensor Kronecker product is implemented in our Matlab/Octave TKPSVD package.

\subsection{Properties of the tensor Kronecker product}
We briefly list some properties of the tensor Kronecker product without going into details. The following properties are easily verified
\begin{align*}
\ten{A} \otimes (\ten{B} + \ten{C}) &= \ten{A} \otimes \ten{B} + \ten{A} \otimes \ten{B}, \\
(\ten{A} + B) \otimes \ten{C} &= \ten{A} \otimes \ten{C} + \ten{B} \otimes \ten{C}, \\
(\alpha\ten{A}) \otimes \ten{B} &= \ten{A} \otimes (\alpha\ten{B}) = \alpha(\ten{A}\otimes \ten{B}),\\
(\ten{A} \otimes \ten{B}) \otimes \ten{C} &= \ten{A} \otimes (\ten{B} \otimes \ten{C}),
\end{align*}
where $\ten{A},\ten{B},\ten{C}$ are $k$-way tensors and $\alpha$ is a scalar. Just as in the matrix case, the tensor Kronecker product is not commutative but permutation equivalent. That is, there exists permutation matrices $P_1,\ldots,P_k$ such that
\begin{equation}
\ten{A} \otimes \ten{B} \;=\; (\ten{B} \otimes \ten{A}) \, \times_1 \, P_1 \, \times_2 \, \cdots \, \times_k P_k.
\label{eq:permequiv}
\end{equation}
This is easily seen from the definition. Indeed, suppose we have that $\ten{C}= \ten{A} \otimes \ten{B}$ and $\tilde{\ten{C}} = \ten{B} \otimes \ten{A}$, then
\begin{align*}
\ten{C}_{[i_1i_{k+1}][i_2i_{k+2}]\cdots [i_ki_{2k}]} &= \ten{A}_{i_{k+1}\cdots i_{2k}} \, \ten{B}_{i_1 \cdots i_k},\\
\tilde{\ten{C}}_{[i_{k+1}i_1][i_{k+2}i_2]\cdots [i_{2k}i_k]} &= \ten{A}_{i_1\cdots i_k} \, \ten{B}_{i_{k+1}\cdots i_{2k}}.
\end{align*}
Now let $P_1,\ldots,P_k$ be the permutation matrices that swap $[i_{k+1}i_1]$ into $[i_1i_{k+1}]$, $[i_{k+2}i_2]$ into $[i_2i_{k+2}]$, \ldots, $[i_{2k}i_k]$ into $[i_ki_{2k}]$ respectively, then \eqref{eq:permequiv} follows. Furthermore, if $\ten{A},\ten{B}$ are cubical and of the same dimension then $\ten{C}$ and $\tilde{\ten{C}}$ are permutation similar, which means that $P_1=P_2=\cdots=P_k$.

The mixed-product property of the Kronecker product states that if $A,B,C,D$ are matrices such that one can form the matrix products $AC,BD$, then $(A \otimes B)(C\otimes D)=(AC) \otimes (BD)$. This is called the mixed-product property, because it mixes the ordinary matrix product and the Kronecker product. We can also write the mixed-product property using the $1$-mode product as $(C \otimes D)\times_1(A \otimes B)=(C\times_1 A) \otimes (D\times_1 B)$. Its generalization involves the mixing of the $r$-mode product with the tensor Kronecker product. Let $A,B$ be matrices and $\ten{C},\ten{D}$ be $k$-way tensors of appropriate dimensions then for any $r \in \{1,\ldots,k\}$
\begin{equation}
(\ten{C} \otimes \ten{D}) \, \times_r \, (A \otimes B) = (\ten{C} \, \times_r \, A) \otimes (\ten{D} \, \times_r \, B).
\label{eq:prodmix}
\end{equation}
What the mixed-product property tells us is that we can obtain the $r$-mode product of the tensor $\ten{C} \otimes \ten{D}$ with the matrix $A \otimes B$ from the Kronecker product of $(\ten{C}\times_r A)$ with $(\ten{D} \times_r B)$. Choosing $r=1$ and replacing $\ten{C},\ten{D}$ with matrices in \eqref{eq:prodmix} results in the matrix mixed-product property.

\section{TKPSVD Theorem and algorithm}
\label{sec:tkpsvd}
Using Definition \ref{def:tkron} we can easily extend the tensor Kronecker product to multiple factors. Suppose we have three $3$-way tensors $\ten{A}^{(1)},\ten{A}^{(2)},\ten{A}^{(3)}$, then their Kronecker product $\ten{A} =  \ten{A}^{(3)} \otimes \ten{A}^{(2)} \otimes \ten{A}^{(1)}$
is completely characterized by the following relationship
\begin{equation}
\ten{A}_{[i_1i_4i_7][i_2i_5i_8][i_3i_6i_9]} =  \ten{A}^{(1)}_{i_1i_2i_3}\, \ten{A}^{(2)}_{i_4i_5i_6}\,\ten{A}^{(3)}_{i_7i_8i_9}.
\label{eq:tkronentries}
\end{equation}
Now suppose we permute all entries such that $\ten{\tilde{A}}_{[i_1i_2i_3][i_4i_5i_6][i_7i_8i_9]} \triangleq \ten{A}_{[i_1i_4i_7][i_2i_5i_8][i_3i_6i_9]}$ and that this is a rank-1 tensor. According to \eqref{def:outerprod}, $\ten{\tilde{A}}$ can then be written as the following outer product of vectors $\ten{\tilde{A}}= \,a^{(1)} \circ a^{(2)} \circ a^{(3)}$ with
\begin{equation}
\ten{\tilde{A}}_{[i_1i_2i_3][i_4i_5i_6][i_7i_8i_9]} =  \, a^{(1)}_{[i_1i_2i_3]}\, a^{(2)}_{[i_4i_5i_6]}\,a^{(3)}_{[i_7i_8i_9]}.
\label{eq:outerprodentries}
\end{equation}
Comparison of \eqref{eq:tkronentries} with \eqref{eq:outerprodentries} allows us to conclude that $a^{(1)} = \textrm{vec}(\ten{A}^{(1)}), a^{(2)} = \textrm{vec}(\ten{A}^{(2)})$ and $a^{(3)} = \textrm{vec}(\ten{A}^{(3})$. We formalize this observation in the following Theorem.
\begin{theorem}
For a given $k$-way tensor $\ten{A} \in \mathbb{R}^{n_1 \times n_2 \times \cdots \times n_k}$, if
\begin{equation}
\ten{A} \;=\; \sum_{j=1}^R \sigma_j \, \ten{A}^{(d)}_j \otimes  \cdots \otimes \ten{A}^{(2)}_j \otimes \ten{A}^{(1)}_j
\; \textrm{ and }\;
\ten{\tilde{A}} \;=\;\sum_{j=1}^R \sigma_j \, a^{(1)}_j \circ a^{(2)}_j \circ \cdots \circ a^{(d)}_j,
\label{eq:outerkron}
\end{equation}
where $\ten{\tilde{A}}$ is the permutation of $\ten{A}$ such that the indices of the $a^{(i)}_j$ vectors are identical to those of the $k$-way $\ten{A}^{(i)}_j$ tensors, then $a^{(i)}_j = \textrm{vec}(\ten{A}^{(i)}_j)$ for all $i\in \{1,\ldots,d\}, j \in \{1,\ldots,R\}$.
\label{theo:TKPSVD}
\end{theorem}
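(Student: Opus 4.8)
The plan is to reduce the statement to a single Kronecker-product term, settle that term by an explicit index computation, and then recover the general case by linearity of the permutation. First I would make Definition~\ref{def:tkron} explicit for $d$ factors: using it together with the associativity property $(\ten{A}\otimes\ten{B})\otimes\ten{C}=\ten{A}\otimes(\ten{B}\otimes\ten{C})$, an induction on $d$ shows that a single term $\ten{B}\triangleq\ten{A}^{(d)}_j\otimes\cdots\otimes\ten{A}^{(1)}_j$ has the same entry pattern as \eqref{eq:tkronentries}, namely
\begin{equation*}
\ten{B}_{[i_1 i_{k+1}\cdots i_{(d-1)k+1}][i_2 i_{k+2}\cdots i_{(d-1)k+2}]\cdots[i_k i_{2k}\cdots i_{dk}]}=\prod_{\ell=1}^{d}\bigl(\ten{A}^{(\ell)}_j\bigr)_{i_{(\ell-1)k+1}\,i_{(\ell-1)k+2}\cdots i_{\ell k}},
\end{equation*}
i.e., the $r$th grouped index of $\ten{B}$ collects, in order, the $r$th native index of each of the $d$ factors. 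This is the main bookkeeping step; everything after it is immediate.

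Next I would write down the permutation. Since the dimensions $n^{(i)}_r$ are fixed by \eqref{eq:dimconstraint} and do not depend on $j$, there is one fixed reindexing of the $dk$ index positions that turns the interleaved layout above into the one in which the $k$ indices belonging to factor $\ell$ are contiguous, namely $\tilde{\ten{B}}_{[i_1\cdots i_k][i_{k+1}\cdots i_{2k}]\cdots[i_{(d-1)k+1}\cdots i_{dk}]}\triangleq\ten{B}_{[i_1 i_{k+1}\cdots i_{(d-1)k+1}]\cdots[i_k i_{2k}\cdots i_{dk}]}$; this is the map called $\ten{\tilde{A}}$ in the statement, applied termwise. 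Combining it with the entry formula above gives $\tilde{\ten{B}}_{[i_1\cdots i_k][i_{k+1}\cdots i_{2k}]\cdots}=\prod_{\ell=1}^{d}\bigl(\ten{A}^{(\ell)}_j\bigr)_{i_{(\ell-1)k+1}\cdots i_{\ell k}}$. I would then compare this with \eqref{def:outerprod}: the corresponding entry of $a^{(1)}_j\circ\cdots\circ a^{(d)}_j$ is $a^{(1)}_{j,[i_1\cdots i_k]}\,a^{(2)}_{j,[i_{k+1}\cdots i_{2k}]}\cdots a^{(d)}_{j,[i_{(d-1)k+1}\cdots i_{dk}]}$. Matching the two products factor by factor, and recalling that by the index-grouping convention of Section~\ref{sec:notations} the grouped index $[i_{(\ell-1)k+1}\cdots i_{\ell k}]$ is exactly the linear index addressing $\vect(\ten{A}^{(\ell)}_j)$, I conclude $a^{(\ell)}_j=\vect(\ten{A}^{(\ell)}_j)$ for each $\ell$; this is consistent with the normalizations, since $\|\vect(\ten{A}^{(\ell)}_j)\|_2=\|\ten{A}^{(\ell)}_j\|_F=1=\|a^{(\ell)}_j\|_2$ and the scalar $\sigma_j$ is common to both sides.

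Finally I would pass from one term to the sum: the map $\ten{A}\mapsto\tilde{\ten{A}}$ is a fixed permutation of entries, hence linear, so applying it to $\ten{A}=\sum_{j=1}^R\sigma_j\,\ten{A}^{(d)}_j\otimes\cdots\otimes\ten{A}^{(1)}_j$ yields $\tilde{\ten{A}}=\sum_{j=1}^R\sigma_j\,\vect(\ten{A}^{(1)}_j)\circ\cdots\circ\vect(\ten{A}^{(d)}_j)$, which is precisely the second decomposition in \eqref{eq:outerkron} with $a^{(i)}_j=\vect(\ten{A}^{(i)}_j)$ for all $i,j$. The only genuine obstacle is the index bookkeeping of the first two steps — in particular, checking that a \emph{single} permutation, independent of $j$, $\ell$ and $r$, simultaneously disentangles all $d$ Kronecker factors across all $k$ modes, and that its action agrees with the little-endian linear-index convention fixed in Section~\ref{sec:notations}. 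Once that entry-level identity is pinned down, the theorem follows directly.
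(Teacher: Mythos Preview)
Your proposal is correct and follows essentially the same approach as the paper: match the entrywise formula for the iterated tensor Kronecker product (the general-$d$ version of \eqref{eq:tkronentries}) against the entrywise formula \eqref{def:outerprod} for the outer product after the index permutation, then read off $a^{(\ell)}_j=\vect(\ten{A}^{(\ell)}_j)$. The paper presents this argument only via the $d=k=3$ worked example immediately preceding the theorem and then ``formalizes the observation''; your write-up is the same idea carried out in full generality, with the induction on $d$, the explicit permutation, and the linearity step made precise.
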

\begin{comment}
\begin{figure}
    \centering
    \begin{subfigure}[b]{0.95\textwidth}
        \includegraphics[width=\textwidth]{}
        \caption{$k=3$, $d=3$}
        \label{fig:d3k3}
    \end{subfigure}
    ~ %add desired spacing between images, e. g. ~, \quad, \qquad, \hfill etc. 
      %(or a blank line to force the subfigure onto a new line)
    \begin{subfigure}[b]{0.6\textwidth}
        \includegraphics[width=\textwidth]{}
        \caption{$k=2$, $d=2$}
        \label{fig:d2k2}
    \end{subfigure}
    ~ %add desired spacing between images, e. g. ~, \quad, \qquad, \hfill etc. 
    %(or a blank line to force the subfigure onto a new line)
   \caption{How the grouped indices of $\ten{A}$ relate to the indices of the KP factors $\ten{A}^{(i)}_j$.}\label{fig:indices}
\end{figure}
\end{comment}

\begin{figure}[ht]
\centering
\includegraphics[width=.95\textwidth]{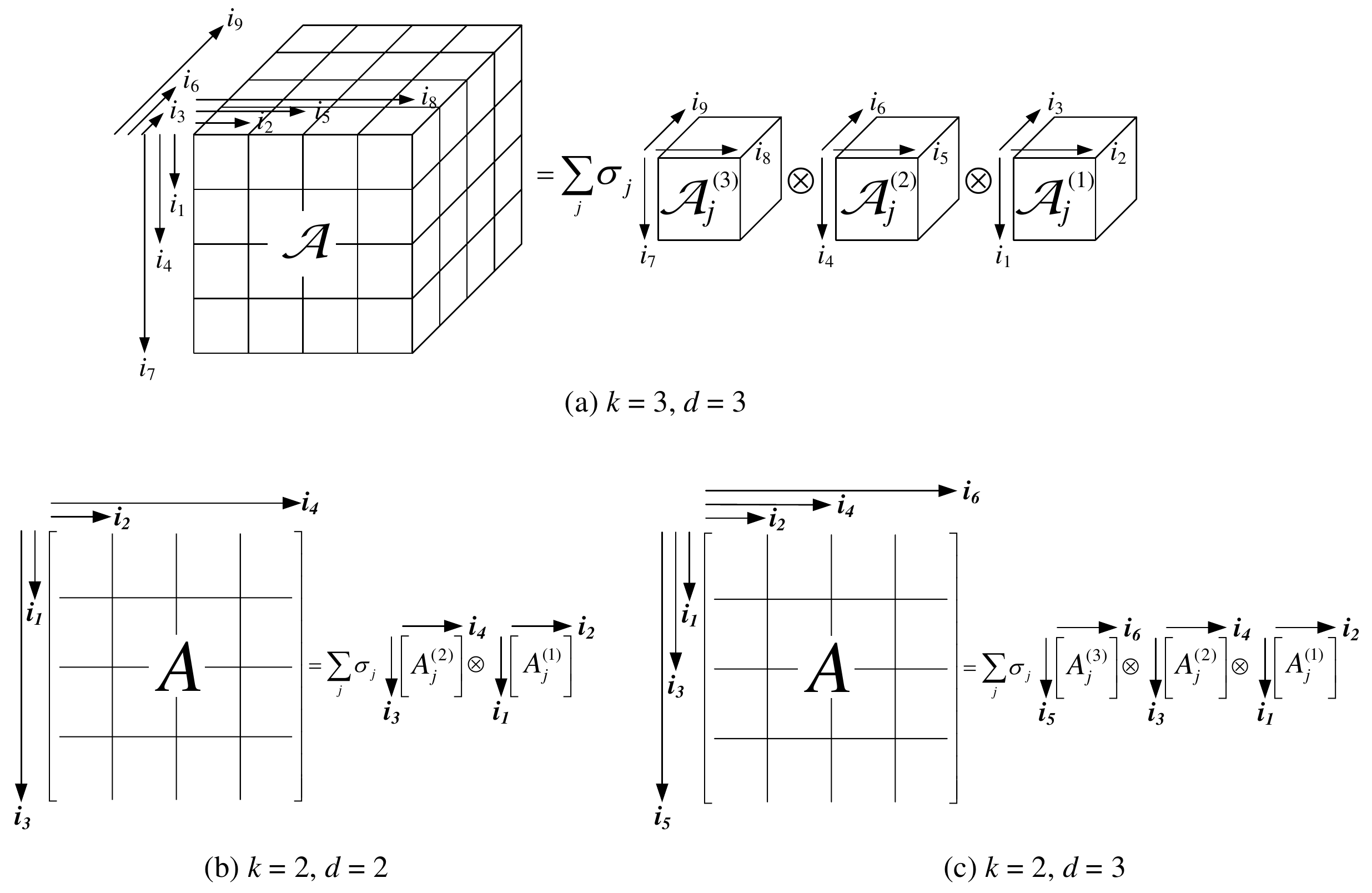}
\caption{How the grouped indices of $\ten{A}$ relate to the indices of the KP factors $\ten{A}^{i}_j$.}
\label{fig:d3kp}
\end{figure}

Observe that the order of the Kronecker products in \eqref{eq:outerkron} is reversed with respect to the order of the outer products. Theorem \ref{theo:TKPSVD} is crucial for the TKPSVD algorithm, since it tells us that the desired decomposition \eqref{eq:tkpsvd} can be computed from a PD of $\ten{\tilde{A}}$. We now derive the TKPSVD algorithm by means of a simple example. Suppose we have a $3$-way tensor $\ten{A}$
%$$
%A \;=\;
%\begin{pmatrix}
% 5 & -1 & -5 & -7 & -9 &  8 &  6 &  3\\
%-2 &  8 & -2 & -6 &  1 &  5 & -1 & -8\\
%-4 & -2 &  7 &  6 & -5 &  3 & -3 & -9\\
%-7 & -7 & -5 & -7 & -5 &  2 &  5 & -2\\
% 3 & -1 & -6 &  3 &  1 & -6 & -8 & -4\\
% 2 &  1 & -9 &-10 &  7 &  6 & -5 & -3\\
%10 &  8 &  5 &  9 & 10 & -5 &  2 &  0\\
%-1 &  7 &  8 &  0 &  5 &  7 & -7 &  5\\
%\end{pmatrix},
%$$
for which we want to find a degree-3 decomposition. This implies, as shown in \eqref{eq:tkronentries}, that each entry of $\ten{A}$ is labeled as $\ten{A}_{[i_1i_4i_7][i_2i_5i_8][i_3i_6i_9]}$. Figure \ref{fig:d3kp}(a) illustrates how the grouped indices of the tensor $\ten{A}$ relate to those of the KP factors $\ten{A}^{(i)}_j$. Theorem \ref{theo:TKPSVD} tells us that the desired TKSPVD can be obtained from a PD of the permuted tensor $\ten{\tilde{A}}$. The first step in the TKPSVD algorithm is then to permute the indices of $\ten{A}$ such that their order corresponds with $i_1,i_2,i_3,i_4,i_5,i_6,i_7,i_8,i_9$. In order to do this, we first reshape the $3$-way tensor $\ten{A}$ into the $9$-way tensor $\ten{A}$ with entries $\ten{A}_{i_1i_4i_7i_2i_5i_8i_3i_6i_9}$. The indices of $\ten{A}$ are then permuted into the desired order $\tilde{\ten{A}}_{i_1i_2i_3i_4i_5i_6i_7i_8i_9}$. The next step of the TKPSVD algorithm is to compute the KP factors $\ten{A}^{(i)}_j$, each of which is computed as a vector in a PD. We therefore group the indices such that we obtain the $3$-way tensor $\tilde{\ten{A}}$ with entries $\tilde{\ten{A}}_{[i_1i_2i_3][i_4i_5i_6][i_7i_8i_9]}$. The steps prior to the computation of the PD are hence summarized as
\begin{align*}
 \ten{A}_{[i_1i_4i_7][i_2i_5i_8][i_3i_6i_9]} \xrightarrow{\textrm{reshape}} &\ten{A}_{i_1i_4i_7i_2i_5i_8i_3i_6i_9}  \xrightarrow{\textrm{permute}} \tilde{\ten{A}}_{i_1i_2i_3i_4i_5i_6i_7i_8i_9} &\\[1.5ex]
\xrightarrow{\textrm{reshape}}  \tilde{\ten{A}}_{[i_1i_2i_3][i_4i_5i_6][i_7i_8i_9]}.
\end{align*}
%$$
%\small{
%\begin{array}{lclclcl}
% & \textrm{tensorize}    & & \textrm{permute}  & & \textrm{reshape} & \\
% A_{[i_1i_3i_5][i_2i_4i_6]}& \longrightarrow & \ten{A}_{i_1i_3i_5i_2i_4i_6} & \longrightarrow & \tilde{\ten{A}}_{i_1i_2i_3i_4i_5i_6} & \longrightarrow & \tilde{\ten{A}}_{[i_1i_2][i_3i_4][i_5i_6]}  .
%\end{array}}
%$$
\begin{figure}[ht]
\centering
\includegraphics[width=.75\textwidth]{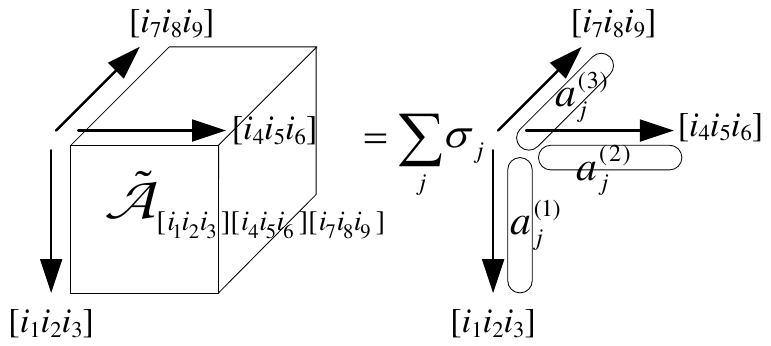}
\caption{Decomposition of the 3-way tensor $\tilde{\ten{A}}$ into a linear combination of rank-1 terms.}
\label{fig:d3ttr1}
\end{figure}
Each of the KP factors $\ten{A}^{(i)}_j$ is obtained from reshaping the $a^{(i)}_j$ vectors of the PD~(Figure \ref{fig:d3ttr1}) into a $3$-way tensor of the correct dimensions. In order to make sure this procedure of reshaping and applying the permutation is clear, we also demonstrate it for a simple matrix example. Suppose we have a $12 \times 12$ Hankel matrix $A$, which we want to decompose into a sum of KPs of a $4\times 4$ matrix with a $3\times 3$ matrix. If the entries of the KP factors $A^{(1)}_j,A^{(2)}_j$ are labeled by ${i_1i_2},{i_3i_4}$ respectively, then the row index of $A$ is $[i_1i_3]$ and the column index is $[i_2i_4]$, shown in Figure \ref{fig:d3kp}(b). The steps prior to the computation of the PD are now
\begin{align*}
 \ten{A}_{[i_1i_3][i_2i_4]} \xrightarrow{\textrm{reshape}} &\ten{A}_{i_1i_3i_2i_4}  \xrightarrow{\textrm{permute}} \tilde{\ten{A}}_{i_1i_2i_3i_4} \xrightarrow{\textrm{reshape}}  \tilde{\ten{A}}_{[i_1i_2][i_3i_4]}.
\end{align*}
The dimensions of the tensors in each of these steps are $12 \times 12, 4\times 3 \times 4 \times 3, 4 \times 4 \times 3 \times 3$ and $16 \times 9$ respectively. The $16 \times 9$ matrix $\tilde{A}$ shown on the front page is in fact obtained from this procedure. The Hankel structure is apparently lost due to the consecutive reshapings and permutation. The final step is to compute a PD with orthogonal rank-1 terms, which for the matrix case is the SVD. The previous two examples might give the impression that $d$ and $k$ need to be equal in the TKPSVD. This is not the case. In Figure \ref{fig:d3kp}(c) we show a degree-3 TKPSVD of a matrix. Here, the steps prior to the computation of the PD are
\begin{align*}
 \ten{A}_{[i_1i_3i_5][i_2i_4i_6]} \xrightarrow{\textrm{reshape}} &\ten{A}_{i_1i_3i_5i_2i_4i_6}  \xrightarrow{\textrm{permute}} \tilde{\ten{A}}_{i_1i_2i_3i_4i_5i_6} \xrightarrow{\textrm{reshape}}  \tilde{\ten{A}}_{[i_1i_2][i_3i_4][i_5i_6]}.
\end{align*}
The pseudo-code for our general TKPSVD algorithm is presented in Algorithm \ref{alg:tkpsvd}. As we will show in Section \ref{sec:strucproofs}, the structure-preserving property of the TKPSVD critically depends on the fact that the rank-1 terms of the computed PD are orthogonal with respect to one another. Another consequence of this orthogonality is that
$$
||\ten{A}||_F=\sqrt{\sigma_1^2 + \cdots +\sigma_R^2},
$$
where $R$ is the total number of terms in the decomposition. The relative approximation error obtained from truncating the number of terms to $r < R$ is then easily computed as
$$
\frac{||A-\sum_{j=1}^r \sigma_j\, A^{(d)}_j \otimes \cdots \otimes A^{(1)}_j ||_F } {||A||_F} = \frac{\sqrt{\sigma_{r+1}^2 + \cdots +\sigma_R^2}}{\sqrt{\sigma_1^2 + \cdots +\sigma_R^2}}.
$$
This is especially convenient when using the TTr1SVD~\cite{ttr1svd15} to compute the PD, since then all $\sigma_j$'s are positive and can be sorted in descending order just like singular values in the matrix case.\\
\\
\framebox[.99\textwidth][l]
{\begin{minipage}{0.99\textwidth}
\begin{algorithm}TKPSVD Algorithm\\
\textit{\textbf{Input}}: tensor $\ten{A}$, dimensions $n^{(1)}_1,\ldots,n^{(d)}_1,n^{(1)}_2,\ldots,n^{(d)}_2,\ldots,n_k^{(1)},\ldots,n^{(d)}_k$\\
\textit{\textbf{Output}}:\makebox[0pt][l]{ $\sigma_1,\ldots,\sigma_R$, tensors $\ten{A}^{(i)}_j$}
\begin{algorithmic}
\STATE $\ten{A} \gets$ reshape $\ten{A}$ into a ($kd$)-way tensor according to $n^{(1)}_1,\ldots,n^{(d)}_1,n^{(1)}_2,\ldots,n^{(d)}_k$
\STATE $\tilde{\ten{A}} \gets $ permute $\ten{A}$ to indices $i_1i_2i_3i_4\cdots i_{kd-1}i_{kd}$
\STATE \makebox[0pt][l]{$\tilde{\ten{A}} \gets $ reshape $\tilde{\ten{A}}$ into a $d$-way tensor by grouping every $k$ indices together}
\STATE $a^{(1)}_1,\ldots,a^{(d)}_R,\sigma_1,\ldots,\sigma_R \gets $ compute a PD of $\tilde{\ten{A}}$ with orthogonal rank-1 terms
\FOR {all nonzero $\sigma_j$}
\STATE $\ten{A}^{(i)}_j \gets $ reshape $a^{(i)}_j$ into a $n^{(i)}_1 \times \cdots \times n^{(i)}_k$ tensor
\ENDFOR
\end{algorithmic}
\label{alg:tkpsvd}
\end{algorithm}
\end{minipage}}\\
\\
The PD with orthogonal rank-1 terms is easily computed for the case $d=2$ as the SVD. When $d \geq 3$, several options are available. A first option is to compute the CPD with the additional constraint of orthogonal factor matrices. This orthogonality constraint limits the size of the  factor matrices and consequently also the total number of rank-1 terms that are possible to find. As a result, the CPD with orthogonality constraints does not lend itself very well to applications. We demonstrate this with a worked out example in Section \ref{sec:applications}. Alternatively, one could compute the HOSVD or the TTr1SVD of $\tilde{\ten{A}}$, as these decompositions have orthogonal rank-1 terms. The CPD with orthogonal factor matrices and the HOSVD can be computed in Matlab using Tensorlab~\cite{tensorlab}, freely available from~\url{http://www.tensorlab.net/}. A Matlab/Octave implementation of Algorithm \ref{alg:tkpsvd} that uses the TTr1SVD and works for any arbitrary degree $d$ and tensor order $k$ can be freely downloaded from \url{https://github.com/kbatseli/TKPSVD}.

% It now becomes clear why the singular vectors of the matrix \eqref{eqn:exampleA} can be reshaped into Hankel matrices. The matrix listed in \eqref{eqn:exampleA} is in fact the permuted $16\times 9$ matrix $\tilde{A}$ of a $12 \times 12$ Hankel matrix $A$, which is decomposed into a sum of Kronecker products of a $4\times 4$ Hankel matrix with a $3\times 3$ Hankel matrix. Since the degree $d=2$, the PD into orthogonal rank-1 terms is simply the SVD. 
In Section \ref{sec:gensym}, we introduce a new framework in which many different structured tensors (symmetric, persymmetric, centrosymmetric, Toeplitz, Hankel,...) can be described and then prove that the TKPSVD algorithm guarantees to preserve these structures in the KP factors. But first, we present a small modification of Algorithm \ref{alg:tkpsvd} for the case of diagonal tensors.

\subsection{Diagonal tensors}
A diagonal tensor $\ten{D}$ is an extremely simple symmetric tensor (see Section \ref{sec:gensym} for the definition). If we define the main diagonal of a cubical tensor as the entries $A_{i_1i_2\cdots i_k}$  with $i_1=i_2=\cdots=i_k$, then the entries not on the main diagonal of a diagonal tensor are per definition zero. It is easy to see that the Kronecker product of two diagonal tensors is also diagonal. This motivates us to adjust Algorithm \ref{alg:tkpsvd} such that only the main diagonal entries $\ten{D}_{i_1i_1\cdots i_1}$ are considered. This reduces the number of entries to store in memory from $n^d$ to $n$. As a result, the diagonal tensor $\ten{D}$ is decomposed into a Kronecker product of diagonal factors $\ten{D}^{(i)}_j$. Suppose a degree $d$ TKPSVD of a diagonal tensor $\ten{D}$ is required. We then consider the vector $a$ that contains all main diagonal entries with entries $a_{i_1i_2 \cdots i_d}$ and reshape it into a $d$-way tensor $\ten{A}$. Note that since the indices are already in the right order, no permutation of indices is required and the PD decomposition can be directly computed from $\ten{A}$. Each KP factor $\ten{D}^{(i)}_j$ of \eqref{eq:tkpsvd} is then an $n_i \times \cdots \times n_i$ diagonal tensor with main diagonal entries $a^{(i)}_j$. The pseudocode for the diagonal TKPSVD algorithm is summarized in Algorithm \ref{alg:dtkpsvd}.\\
\\
\framebox[.99\textwidth][l]{\begin{minipage}{0.99\textwidth}
\begin{algorithm}TKPSVD Algorithm for a diagonal tensor\\
\textit{\textbf{Input}}: diagonal tensor $\ten{D}$, dimensions $n_1,n_2,\ldots,n_d$\\
\textit{\textbf{Output}}:\makebox[0pt][l]{ $\sigma_1,\ldots,\sigma_R$, diagonal tensors $D^{(i)}_j$}
\begin{algorithmic}
\STATE $a \gets $ main diagonal entries of $\ten{D}$
\STATE $\ten{A} \gets$ reshape $a$ into an $n_1 \times n_2 \times \ldots \times n_d$ tensor
\STATE $a^{(1)}_1,\ldots,a^{(d)}_R,\sigma_1,\ldots,\sigma_R \gets $ compute a PD of ${\ten{A}}$ with orthogonal rank-1 terms
\FOR {all nonzero $\sigma_j$}
\STATE $\ten{D}^{(i)}_j \gets $ a $n_i \times \cdots \times n_i$ diagonal tensor with main diagonal entries $a^{(i)}_j$ 
\ENDFOR
\end{algorithmic}
\label{alg:dtkpsvd}
\end{algorithm}
\end{minipage}}\\
\\
It is interesting to investigate whether it is possible to adjust Algorithm \ref{alg:tkpsvd} to exploit other specific structures, like the general symmetries which we define in Section \ref{sec:gensym}. At first sight this does not seem to be straightforward to exploit, since $\tilde{\ten{A}}$ will not retain the original structure. We keep this problem for future research.

\begin{comment}
A first option is to compute the CPD. Since the rank of a tensor is in general not known a priori, CPD-algorithms will require the number of computed terms $r$ as an input to the algorithm. If $r < R$, then the computed terms will only constitute an approximation of $\tilde{\ten{A}}$. Similar to the CPD case, the multilinear rank is also not known a priori, and Tucker algorithms will therefore require the user to specify the desired size of the core tensor. If the specified core size is smaller than the multilinear rank, then also an approximation of $\tilde{\ten{A}}$ is obtained. An alternative for both the CPD and Tucker methods is the tensor-train rank-1 (TTr1) decomposition \cite{ttr1svd15} that does not require any input from the user. Just like the HOSVD, all rank-1 terms are orthogonal and the outer product vectors are of unit norm. Also, when the TTr1 decomposition is written in the HOSVD format, a sparser core tensor $\ten{S}$ is obtained. It is for these reasons that the TTr1 decomposition is utilized in our TKPSVD algorithm. The core computation of the TTr1SVD is a series of economical matrix SVD computations. The very first economical SVD is computationally the most expensive step and costs approximately $O(14(m_2\cdots m_dn_2\cdots n_d)m_1^2n_1^2+8m_1^3n_1^3)$ flops. Ways on how to reduce the total number of SVDs by ignoring numerically small singular values are detailed in \cite{ttr1svd15}. 
\end{comment}
\begin{figure}[th]
\centering
\includegraphics[width=.65\textwidth]{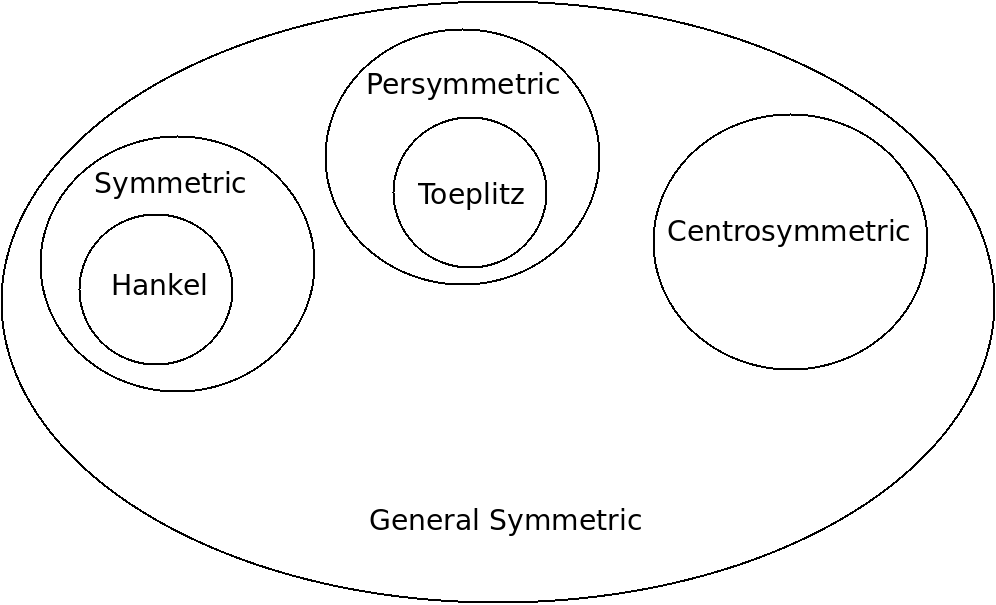}
\caption{Overview general symmetric tensors.}
\label{fig:gensym}
\end{figure}
\section{General symmetric tensors}
\label{sec:gensym}
It turns out that the $d$-way tensor $\tilde{\ten{A}}$ in Algorithm \ref{alg:tkpsvd} allows us to generalize the notion of symmetric tensors in a very natural way. The motivation of introducing general symmetry lies in the fact that then only one proof suffices to show the preservation of symmetry, persymmetry, centrosymmetry and many other symmetries in the KP factors of the TKPSVD. This new framework also provides a different perspective of describing and investigating these different tensor structures. Figure \ref{fig:gensym} shows an overview of how the notion of general symmetry encapsulates symmetric, persymmetric, centrosymmetric, Toeplitz and Hankel tensors. The key idea of the general symmetric structure is that it involves particular permutations $P$ of the entries of $\textrm{vec}(\ten{A})$ that can be decomposed into Kronecker product of smaller permutations along each mode of $\ten{\tilde{A}}$. We discuss and demonstrate this decomposition of the permutation $P$ for three particular cases. In the remainder of this Section we always assume that $\ten{A}$ is a $k$ way cubical tensor of dimensions $n$.

\begin{comment}
\subsection{Involutions on index pairs}
A function $f$ is an involution if $f(f(x))=x$ for all $x$ in the domain of $f$. Let $a \in \mathbb{R}^{m_1n_1} = \textrm{vec}(A)$, whose entries are labeled by two indices $i_1=1,\ldots,m_1,i_2=1,\ldots,n_1$. Then we consider all involutions $f$ that map the index pair $[i_1i_2]$ onto another index pair $[\tilde{i}_1\tilde{i}_2]$. The bijection between the set of all possible index pairs $i_1i_2$ and the set $\{1,\ldots,m_1n_1\}$ implies that the total number of involutions $N(k)$ is given by the recurrence relationship
$$
N(k)=N(k-1)+(k-1)N(k-2),
$$
with $N(0)=N(1)=1$. It is straightforward to see that every involution on $[i_1i_2]$ can be represented by a permutation matrix $P$ such that the vector $P\,a$ is indexed by $[\tilde{i_1}\tilde{i_2}]$. Not all $N(m_1n_1)$ involutions on $a$ are of interest.  In order to define general symmetries, we only need to consider involutions $P$ that can be decomposed into a KP of smaller involutions along each mode of $\tilde{\ten{A}}$ for any arbitrary degree $d$. We discuss and demonstrate this decomposition of $P$ for three special cases.
\end{comment}
\subsection{Symmetry}
\label{subsec:symmetry}
The symmetric structure of a $k$-way cubical tensor $\ten{A}$ can be defined as a particular permutation of the entries of $\textrm{vec}(\ten{A})$. This permutation is described by the perfect shuffle matrix.
\begin{definition}
\label{def:genpershuf}
The perfect shuffle matrix $S$ is the $n^k \times n^k$ permutation matrix 
$$
S \;=\; \begin{pmatrix}
I(1:n^{k-1}:n^k,:)\\
I(2:n^{k-1}:n^k,:)\\
\vdots\\
I(n^{k-1}:n^{k-1}:n^k,:)\\
\end{pmatrix},
$$
where $I$ is the $n^k \times n^k$ identity matrix and Matlab colon notation is used to denote submatrices. 
%$$
%\tilde{a} \triangleq S\, \textrm{vec}(A) \;\textrm{ with } \; \tilde{a}_{[\tilde{i_1}\tilde{i_2}]} \;=\; a_{[i_2i_1]},
%$$
%where $a_{[i_1i_2]}$ is an entry of $\textrm{vec}(A)$ and $\tilde{a}_{[\tilde{i_1}\tilde{i_2}]}$ is an entry of $\tilde{a}$. 
\end{definition}

It is easily verified that for a symmetric $k$-way tensor $\ten{A}$ we have that $S\,\textrm{vec}(\ten{A})=\textrm{vec}(\ten{A})$. We now turn this reasoning on its head and define a symmetric tensor as any tensor $\ten{A}$ that satisfies $S\,\textrm{vec}(\ten{A})=\textrm{vec}(\ten{A})$. The perfect shuffle matrix $S$ reduces to the matrix defined in~\cite[p.~86]{Loan200085} for the case $k=2$. In this sense, Definition \ref{def:genpershuf} generalizes the notion of a perfect shuffle matrix to tensors.

In what follows we will apply the TKPSVD algorithm to construct the $\tilde{\ten{A}}$ tensor and see how this affects the equation $S\,\textrm{vec}(\ten{A})=\textrm{vec}(\ten{A})$. In order to illustrate this process we will consider the $3$-way example tensor from Section \ref{sec:tkpsvd} and suppose that it is symmetric. This symmetry implies that
\begin{equation}
\ten{A}_{[i_1i_4i_7][i_2i_5i_8][i_3i_6i_9]}=\ten{A}_{[i_2i_5i_8][i_3i_6i_9][i_1i_4i_7]}= \cdots = \ten{A}_{[i_3i_6i_9][i_1i_4i_7][i_2i_5i_8]}.
\label{eq:indexswap}
\end{equation}
In other words, the symmetry of $\ten{A}$ is equivalent with swapping $i_1$ with either $i_2$ or $i_3$, $i_4$ with either $i_5$ or $i_6$ and $i_7$ with either $i_8$ or $i_9$. The TKPSVD algorithm reshapes and permutes the symmetric tensor $\ten{A}$ into the tensor $\tilde{\ten{A}}$, with entries $\tilde{\ten{A}}_{[i_1i_2i_3][i_4i_5i_6][i_7i_8i_9]}$. Although $\tilde{\ten{A}}$ is not symmetric, the symmetry of $\ten{A}$ still allows us to swap the indices as indicated in \eqref{eq:indexswap} such that 
$$
\tilde{\ten{A}}_{[i_1i_2i_3][i_4i_5i_6][i_7i_8i_9]}=\tilde{\ten{A}}_{[i_4i_5i_6][i_7i_8i_9][i_1i_2i_3]}= \cdots = \tilde{\ten{A}}_{[i_7i_8i_9][i_1i_2i_3][i_4i_5i_6]},
$$
which can be rewritten as
\begin{equation}
\tilde{\ten{A}} \,\times_1 \, S_1 \,\times_2 \, S_2 \, \times_3 \, S_3 \;=\; \tilde{\ten{A}}
\label{eq:symmodes}
\end{equation}
where all $S_i$ matrices are perfect shuffle matrices. By using \eqref{eq:modeprodvec}, equation \eqref{eq:symmodes} can be rewritten as
\begin{equation}
(S_3  \otimes S_2 \otimes S_1) \, \textrm{vec}(\tilde{\ten{A}}) \;=\; \textrm{vec}(\tilde{\ten{A}}),
\label{eq:SvecA}
\end{equation}
which is nothing else but a reformulation of the symmetry $S \, \textrm{vec}(\ten{A})=\textrm{vec}(\ten{A})$ in terms of $\textrm{vec}(\tilde{\ten{A}})$. If $Q$ denotes the permutation matrix such that $Q\, \textrm{vec}(A) \;=\; \textrm{vec}(\tilde{\ten{A}})$, then from
\begin{align*}
(S_3\otimes  S_2 \otimes S_1) \, \textrm{vec}(\tilde{\ten{A}}) &= \textrm{vec}(\tilde{\ten{A}}), \\
\Leftrightarrow (S_3\otimes  S_2 \otimes S_1) \, Q\, \textrm{vec}(A)&= Q\, \textrm{vec}(A),\\
\Leftrightarrow  Q^T \, (S_3\otimes  S_2 \otimes S_1) \, Q\, \textrm{vec}(A)&=  \textrm{vec}(A),
\end{align*}
we infer that $S=Q^T \, (S_3\otimes  S_2 \otimes S_1)\,Q$. This can be interpreted as the perfect shuffle matrix $S$ being ``decomposed'' into a Kronecker product of smaller perfect shuffle matrices. Another way of seeing this equality is that $S$ and $S_3\otimes  S_2 \otimes S_1$ are permutation similar.
%Observe that the derivation of the decomposition of $S$ is valid for any degree $d$ or dimensions $m_1,\ldots,m_d$.
%A more ``graphical'' way of showing how the symmetry of $A$ translates into symmetries along the modes of $\tilde{\ten{A}}$ is
%$$
%\tiny{
%\begin{array}{ccccccc}
% & \textrm{tensorize}    & & \textrm{permute}  & & \textrm{reshape} & \\
% A_{[i_1\cdots i_{2d-1}][i_2\cdots i_{2d}]}& \longrightarrow & \ten{A}_{i_1\cdots i_{2d-1}i_2\cdots i_{2d}} & \longrightarrow & \tilde{\ten{A}}_{i_1i_2\cdots i_{2d-1}i_{2d}} & \longrightarrow & \tilde{\ten{A}}_{[i_1i_2]\cdots [i_{2d-1}i_{2d}]}\\
% \verteq &  & \verteq & & \verteq & & \verteq \\
%  A_{[i_2\cdots i_{2d}][i_1\cdots i_{2d-1}]}& \longrightarrow & \ten{A}_{i_2\cdots i_{2d}i_1\cdots i_{2d-1}} & \longrightarrow & \tilde{\ten{A}}_{i_2i_1\cdots i_{2d} i_{2d-1}} & \longrightarrow & \tilde{\ten{A}}_{[i_2i_1]\cdots [i_{2d}i_{2d-1}]}\\
%\end{array}}
%$$
%\begin{example}
%Suppose we have the 
%\end{example}

\subsection{Centrosymmetry}
Another interesting and useful permutation of the entries of $\textrm{vec}(\ten{A})$ is the exchange matrix $J$, which is the $n^k \times n^k$ column-reversed identity matrix. This permutation maps each index $i_j$ of $\textrm{vec}(\ten{A})$ to $n-i_j+1$, e.g., for the $3$-way tensor $\ten{A}$ from Section \ref{sec:tkpsvd} the entry $\textrm{vec}(\ten{A})_{[i_1i_4i_7i_2i_5i_8i_3i_6i_9]}$ is mapped to
\begin{align*}
\textrm{vec}(\ten{A})_{[n-i_1+1\; n-i_4+1\; n-i_7+1\; n-i_2+1\; n-i_5+1\; n-i_8+1\; n-i_3+1\; n-i_6+1\; n-i_9+1]}
\end{align*}
and vice-versa.
A $k$-way cubical tensor $\ten{A}$ is defined to be centrosymmetric when
$$
J \, \textrm{vec}(\ten{A}) \;= \; \textrm{vec}(\ten{A}).
$$
Our definition of centrosymmetric tensors is equivalent with the alternative definitions given in~\cite{chenchenqi2014,zhaoyang2014}. The ``decomposition" argument of $J$ is completely analogous to the decomposition of the perfect shuffle matrix $S$ for symmetric tensors. Following the TKPSVD reshapings and permutations leads to the expression $J\;=\; Q^T \, (J_3 \otimes  J_2 \otimes J_1)\,Q$, where $J_1,J_2,J_3$ are exchange matrices and $Q$ is the same permutation as described in Subsection \ref{subsec:symmetry}.

\subsection{Persymmetry}
Given the definition of the perfect shuffle matrix $S$ and exchange matrix $J$, we now define a $k$-way cubical tensor $\ten{A}$ to be persymmetric when
$$
S\, J \, \textrm{vec}(\ten{A}) \;=\; \textrm{vec}(\ten{A}),
$$
applies. Using similar arguments as in the symmetric and centrosymmetric cases, we can write the following decomposition $S\, J\;=\; Q^T \, (S_3\,J_3  \otimes S_2\,J_2 \otimes S_1\,J_1)\,Q$. Using the mixed-product property of the Kronecker product, we can rewrite the permutation decomposition as $S\, J\;=\; Q^T \, (S_3\otimes S_2 \otimes S_1) (J_3  \otimes J_2 \otimes J_1)\,Q$.

\subsection{General symmetric tensor}
We now define general symmetric tensors by generalizing the previous three examples of particular symmetries. 
\begin{definition}
A $k$-way cubical tensor $\ten{A}$ is a general symmetric tensor if
$$
P\, \textrm{vec}(\ten{A}) \;=\; \textrm{vec}(\ten{A}),
$$
where the permutation matrix $P$ can be written for any arbitrary degree $d$ and dimensions $n^{(i)}_r$ into a Kronecker product of smaller permutation matrices $P_1,\ldots,P_d$ as
\begin{equation}
P \;=\; Q^T \, (P_d \otimes \cdots \otimes  P_2 \otimes P_1)\,Q,
\label{eq:permcond}
\end{equation}
where $Q$ is the permutation matrix such that $Q\, \textrm{vec}(\ten{A})=\textrm{vec}(\tilde{\ten{A}})$ and the dimension of each of the permutation matrices $P_i$ is $\prod_{r=1}^k n^{(i)}_r$.
\label{def:gensym}
\end{definition}

General skew-symmetric tensors are defined similarly as in Definition \ref{def:gensym} where now $P\, \textrm{vec}(\ten{A}) = - \textrm{vec}(\ten{A})$ needs to hold. The permutation matrices $P_1,\ldots,P_k$ do not necessarily need to be of the same dimension. In fact, the definition requires that \eqref{eq:permcond} holds for any arbitrary degree $d$ and dimensions $n^{(i)}_r$ of the KP factors. For example, there are several ways in which a general symmetric $12 \times 12 \times 12$ tensor $\ten{A}$ can be decomposed into a TKPSVD. There are three different decompositions when $d=3$, depending on the order of the $3 \times 3 \times 3, 2\times 2 \times 2$ and $2\times 2 \times 2$ tensors. Likewise when $d=2$ there are different orderings of the $3 \times 3 \times 3, 4 \times 4 \times 4$ or $6 \times 6 \times 6, 2 \times 2 \times 2$ tensors. Each of these TKPSVDs is characterized by different $P_k$ and $Q$ permutation matrices, nevertheless, \eqref{eq:permcond} needs to hold for all of them for $\ten{A}$ to be general symmetric. 

\subsection{Shifted-index structure}
Within the set of general symmetric tensors there are other interesting, more restrictive structures that we call shifted-index structures. These are tensors whose entries do not change when at least one index is ``shifted''.
\begin{definition}
A $k$-way cubical tensor $\ten{A}$ has a shifted-index structure if
$$
A_{[i_1][i_2]\cdots [i_k]} \;=\; A_{[i_1+\Delta_1][i_2+\Delta_2]\cdots [i_k+\Delta_k]},
$$
where at least one of the integer shifts $\Delta_1,\ldots,\Delta_k$ is nonzero. For any two nonzero shifts $\Delta_i,\Delta_j$, either $\Delta_i=\Delta_j$ or $\Delta_i=-\Delta_j$ must be satisfied.
%\begin{align*}
%k_1 = 0 &\textrm{ and } -n<k_2<n & \textrm{(constant rows)},\\
%-m<k_1<m &\textrm{ and } k_2= 0 & \textrm{(constant columns)},\\
%-\textrm{min}(m,n)<k_1<\textrm{min}(m,n) &\textrm{ and } k_2= k_1 & \textrm{(Toeplitz)},\\
%-\textrm{min}(m,n)<k_1<\textrm{min}(m,n) &\textrm{ and } k_2= -k_1 & \textrm{(Hankel)}. 
%\end{align*}
\end{definition}

Of course, none of the shifted indices $i_1+\Delta_1,\ldots,i_k+\Delta_k$ are allowed to go ``out of bounds''. The case where $\Delta_1=\Delta_2=\cdots = \Delta_k$ is called a Toeplitz tensor and is a special case of a persymmetric tensor. Similarly, a symmetric tensor for which all shifts are zero except for one arbitrary pair $\Delta_i=-\Delta_j$ is called a Hankel tensor. A tensor for which all shifts are zero except $\Delta_r$ has constant entries along the $r$ fibres. It is straightforward to show that any shifted-index structure is also a general symmetry by writing down its corresponding permutation matrix and showing that Definition \ref{def:gensym} applies.
\begin{example}
Consider a $27 \times 27 \times 27$ Hankel tensor $\ten{H}$. The Hankel structure means that the tensor is also symmetric, which implies that $S\,\textrm{vec}(\ten{H}) = \textrm{vec}(\ten{H})$ with $S$ the $19683 \times 19683$ perfect shuffle matrix. Now consider a degree-3 TKPSVD where each KP factor is a $3 \times 3 \times 3$ tensor. We can then retrieve $S$ from the $27 \times 27$ perfect shuffle matrix $S_1=S_2=S_3$ as $S = Q^T\,(S_3 \otimes S_2 \otimes S_1)\,Q$, where $Q$ is the permutation matrix in $\textrm{vec}(\tilde{\ten{H}})=Q\,\textrm{vec}(\ten{H})$. The $27 \times 27$ permutation matrices $P_1=P_2=P_3$ that define a $3 \times 3 \times 3$ Hankel tensor $\ten{A}$ are completely specified by the vector of indices
$$
i= [1,4,5,10,7,8,11,12,15,2,13,14,19,16,17,20,21,24,3,22,23,6,25,26,9,18,27],
$$
since $\textrm{vec}(\ten{A})(i)=\textrm{vec}(\ten{A})$, where $\textrm{vec}(\ten{A})(i)$ is Matlab notation to denote $P_3\,\textrm{vec}(\ten{A})$. If we now set $P=Q^T\,(P_3 \otimes P_2 \otimes P_1)\,Q$ then indeed $P\,\textrm{vec}(\ten{H})=\textrm{vec}(\ten{H})$ is satisfied. Figure \ref{fig:perms} shows the nonzero pattern of both $S$ and $P$. Observe that although the Hankel permutation $P$ is a special case of a symmetry, the nonzero pattern is very different from that of $S$.
\end{example}

\begin{figure}
    \centering
    \begin{subfigure}[b]{0.48\textwidth}
        \includegraphics[width=\textwidth]{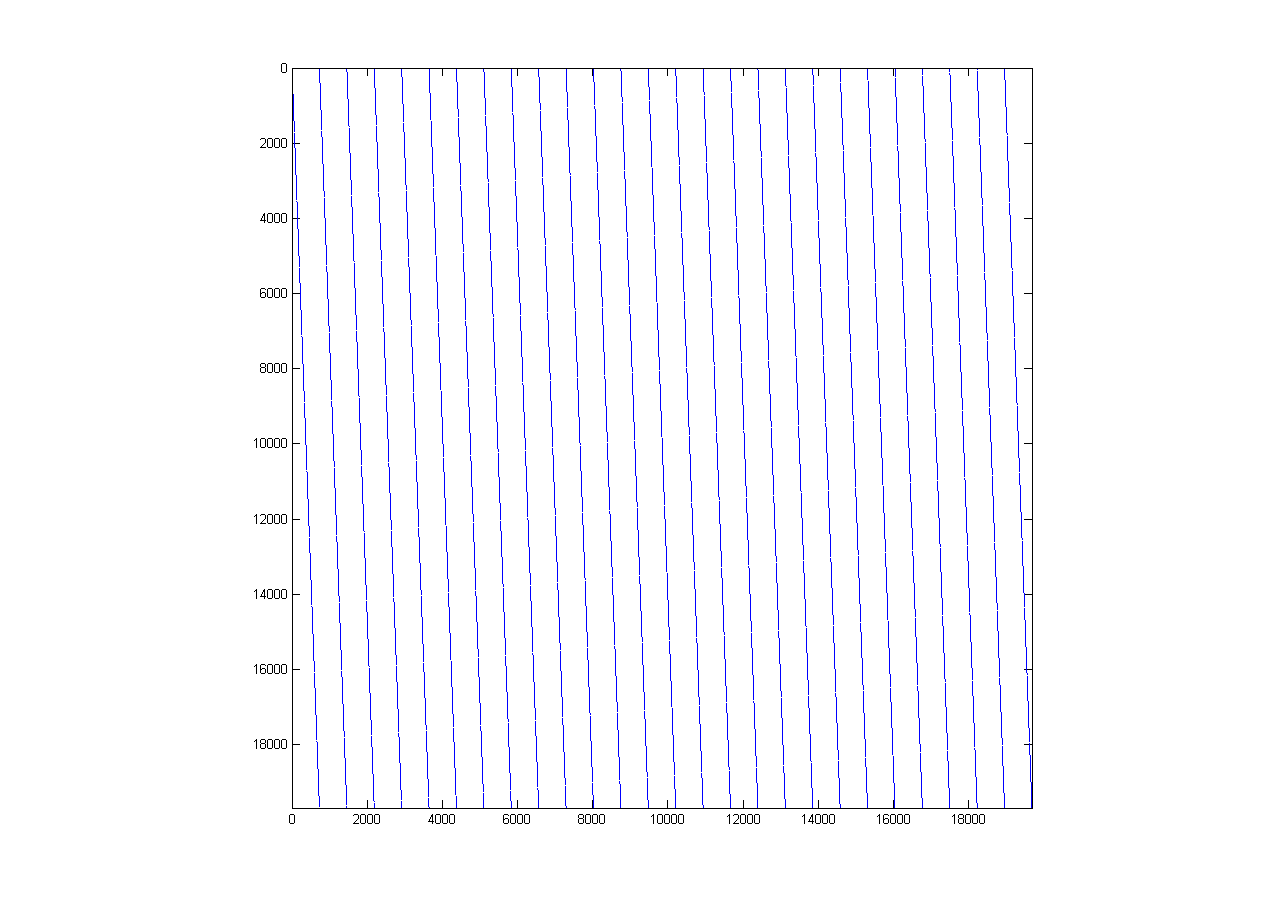}
        \caption{Perfect shuffle matrix $S$}
        \label{fig:psym}
    \end{subfigure}
    ~ %add desired spacing between images, e. g. ~, \quad, \qquad, \hfill etc. 
      %(or a blank line to force the subfigure onto a new line)
    \begin{subfigure}[b]{0.48\textwidth}
        \includegraphics[width=\textwidth]{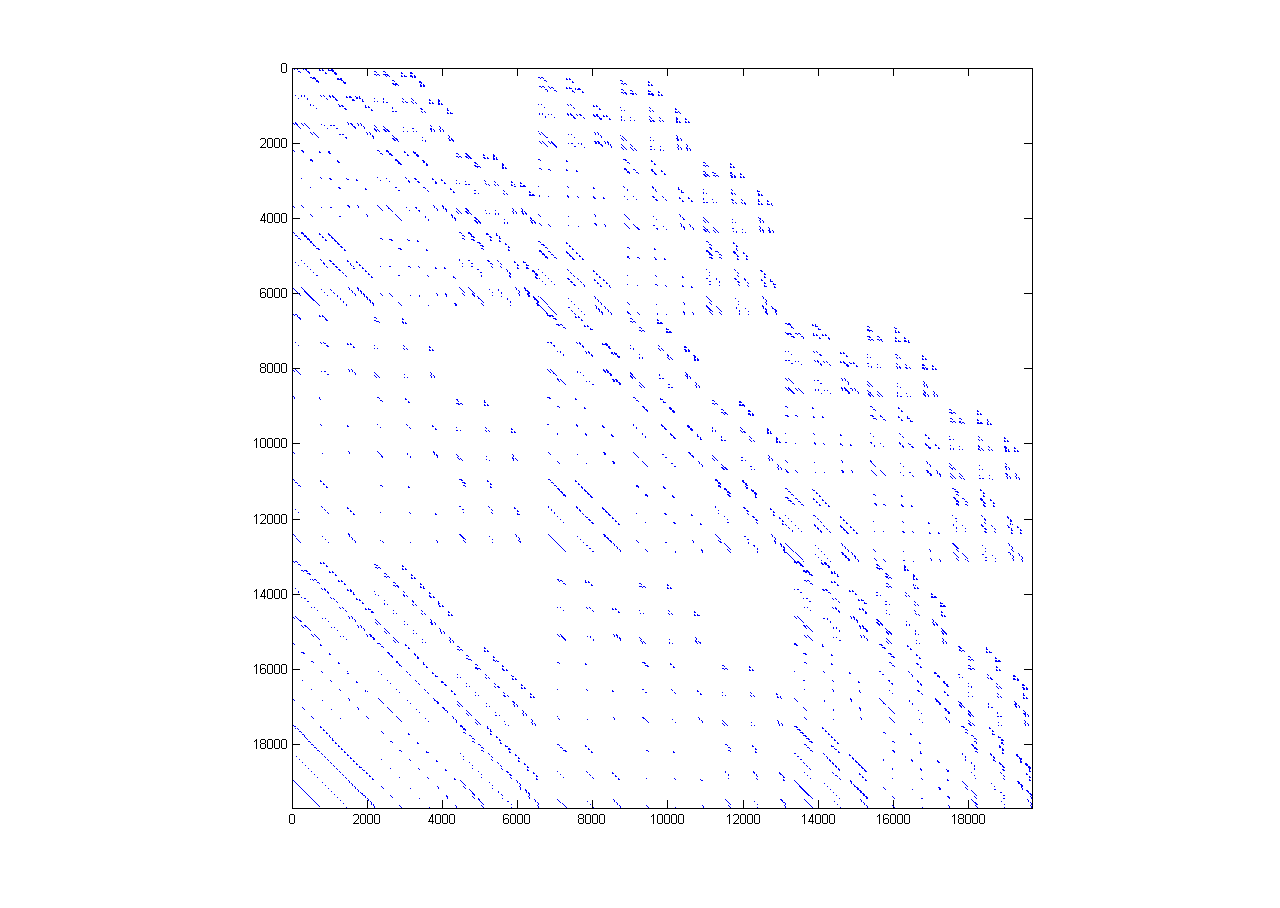}
        \caption{Hankel permutation matrix $P$}
        \label{fig:phankel}
    \end{subfigure}
    ~ %add desired spacing between images, e. g. ~, \quad, \qquad, \hfill etc. 
    %(or a blank line to force the subfigure onto a new line)
   \caption{Permutation matrices for a $27 \times 27 \times 27$ Hankel tensor.}\label{fig:perms}
\end{figure}

\section{Preservation of structures}
\label{sec:strucproofs}
It is quite a remarkable fact that all general symmetries, included the shifted-index structures, are preserved in the cubical KP factors $\ten{A}^{(i)}_j$ when they are computed according to Algorithm \ref{alg:tkpsvd}. The orthogonality of the rank-1 terms in the PD plays a crucial role in this. Another critical element are the scalar coefficients $\sigma_j$'s of the TKPSVD, which are required to be distinct. We now show how this comes about.
\subsection{General symmetry}
In order to prove general symmetry preservation in the KP factors, we need the following useful lemma.
\begin{lemma}
Suppose $a=\textrm{vec}(\ten{A}) \in \mathbb{R}^{n^k \times 1}$ with $a^Ta=1$ and $P$ is a permutation matrix that corresponds with a general symmetry. Then the cubical tensor $\ten{A}$ obtained from reshaping $a$ is general symmetric if and only if $a^T\,P\,a=1$ or general skew-symmetric if and only if $a^T\,P\,a=-1$.
\label{lemma:symA}
\end{lemma}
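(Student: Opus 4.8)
The plan is to reduce everything to one elementary fact: a permutation matrix is orthogonal, hence norm-preserving, and then to read off the conclusion from the defining equations of general (skew-)symmetry in Definition~\ref{def:gensym}. Note that the Kronecker-product structure of $P$ required by ``$P$ corresponds with a general symmetry'' will play no role in this particular lemma (it becomes essential only later, when it is combined with the orthogonality of the rank-1 terms of the PD); all that is used here is that $P$ is a permutation matrix and that $P\,\textrm{vec}(\ten{A})=\textrm{vec}(\ten{A})$ (resp. $=-\textrm{vec}(\ten{A})$) is exactly the definition of $\ten{A}$ being general symmetric (resp. general skew-symmetric).

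First I would record that since $P$ is a permutation matrix, $P^TP=I$, so $(Pa)^T(Pa)=a^Ta=1$; thus $a$ and $Pa$ are both unit vectors. Next I would expand
\begin{align*}
\|Pa-a\|_2^2 &= (Pa)^T(Pa) - 2\,a^TPa + a^Ta \;=\; 2 - 2\,a^TPa,\\
\|Pa+a\|_2^2 &= (Pa)^T(Pa) + 2\,a^TPa + a^Ta \;=\; 2 + 2\,a^TPa.
\end{align*}
From the first identity, $a^TPa=1$ forces $\|Pa-a\|_2=0$, i.e. $Pa=a$; conversely $Pa=a$ gives $a^TPa=a^Ta=1$. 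The second identity gives in the same way $a^TPa=-1\iff Pa=-a$. (As a byproduct these identities also show $-1\le a^TPa\le 1$ for every unit vector $a$, so the lemma is really identifying when this bilinear form attains its extreme values.)

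Finally I would translate back to tensors: writing $a=\textrm{vec}(\ten{A})$, the equation $Pa=a$ is precisely $P\,\textrm{vec}(\ten{A})=\textrm{vec}(\ten{A})$, which by Definition~\ref{def:gensym} says that $\ten{A}$ is general symmetric, and $Pa=-a$ is the defining property of a general skew-symmetric tensor. Chaining the two equivalences then yields both assertions of the lemma. There is no real obstacle here; the only point that needs care is to invoke the full orthogonality $P^TP=I$ (so that the cross term cancels exactly) rather than merely the fact that $P$ permutes entries, and to observe that the argument never uses $P^2=I$ — only that $P$ is a norm-preserving permutation encoding a general symmetry.
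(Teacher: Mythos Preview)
Your proof is correct and follows essentially the same approach as the paper: both arguments use that $P$ is orthogonal so $\|Pa\|_2=1$, and then invoke the equality case of Cauchy--Schwarz for unit vectors to conclude $a^TPa=\pm 1 \iff Pa=\pm a$. The only cosmetic difference is that the paper phrases this via the cosine of the angle between $a$ and $Pa$, whereas you expand $\|Pa\mp a\|_2^2$ directly; your version is arguably cleaner and handles both directions of each equivalence in one line.
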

\begin{proof}
We first prove $Pa=a \Rightarrow a^T\,P \, a=1$. Since $a$ has unit norm we can write $a^Ta=1$ and substitution of $a$ by $Pa$ then results in $a^TPa=1$. The proof for $a^T\,P \, a=1 \Rightarrow Pa=a$ goes as follows. Let $b=P\,a$, then we have that $||b||_2=1$ and $a^T\,b = \cos{\alpha}$. Since $\cos{\alpha}=1$ it follows that $\alpha = 0$ and $a$ is a multiple of $b$, but since $||a||_2=||b||_2=1$ it follows that $a=b=P\,a$.
The proof for the skew-symmetry of $\ten{A}$ follows the same logic.
\quad \end{proof}

The general symmetry of $\ten{A}$ implies that
\begin{equation}
\tilde{\ten{A}}=\tilde{\ten{A}} \times_1 P_1 \times_2 P_2 \times_3 \cdots \times_d  P_d,
\label{eq:symAtilde}
\end{equation}
where all $P_i$'s are permutation matrices. We now substitute $\tilde{\ten{A}}$ in both sides of \eqref{eq:symAtilde} by its PD and obtain
\begin{equation}
\sum_{j=1}^R \sigma_j \, a^{(1)}_j \circ a^{(2)}_j \circ \cdots \circ a^{(d)}_j \;=\; \sum_{i=1}^R \sigma_j \, P_1a^{(1)}_j \circ P_2a^{(2)}_j \circ \cdots \circ P_da^{(d)}_j.
\label{eq:symAtilde2}
\end{equation}
The orthogonality of each rank-1 term and $||a^{(i)}_j||_2=1$ implies that the mode products of both sides of \eqref{eq:symAtilde2} with $(a^{(1)}_k)^T,\ldots,(a^{(d)}_k)^T$ along the modes $1,2,\ldots,d$ respectively for any $k \in \{1,\ldots,R\} \subset \mathbb{N}$ results in
\begin{equation}
\sigma_k \;=\; \sum_{j=1}^R \sigma_j \; (a^{(1)}_k)^T P_1\, a^{(1)}_j \; (a^{(2)}_k)^T P_2\, a^{(2)}_j \, \cdots \, (a^{(d)}_k)^T P_d\, a^{(d)}_j.
\label{eq:symAtilde3}
\end{equation}
We have that each of the $(a^{(i)}_k)^T P_i a^{(i)}_j$ scalars lies in the real interval $[-1,1]$, since $||a^{(i)}_j||_2=1$ for all $j$ and $P$ is a permutation matrix. We now assume that the following condition holds.
\begin{condition}
\label{cond:distinctsigma}
All $\sigma_j$'s in the PD of $\tilde{\ten{A}}$ are distinct and all terms on the right-hand side of \eqref{eq:symAtilde3} except for the one corresponding with $\sigma_k$ vanish.
\end{condition}

The equality in \eqref{eq:symAtilde3} holds under Condition \ref{cond:distinctsigma} when $(a^{(i)}_k)^T\,P_i\,a^{(i)}_j=0$ for at least one particular $i$ when $k \neq j$ and when
\begin{equation}
\prod_{i=1}^{d} (a^{(i)}_k)^T\,P_i\,a^{(i)}_k \;=\; 1.
\label{eq:equalcond}
\end{equation}

The only possible way for \eqref{eq:equalcond} to be true under the constraint that each of the $(a^{(i)}_k)^T P_i a^{(i)}_k$ scalars lies in the interval $[-1,1]$ is when 
\begin{equation}
(a^{(i)}_k)^T\,P_i\,a^{(i)}_k \;=\; \pm 1.
\label{eq:symorskew}
\end{equation}
From Lemma \ref{lemma:symA} we know that if the right-hand side of \eqref{eq:symorskew} is 1, then $\ten{A}^{(i)}_k$ is general symmetric, otherwise $\ten{A}^{(i)}_k$ is general skew-symmetric. In addtion, \eqref{eq:equalcond} implies that there are either zero or an even number of general skew-symmetric KP factors in each term. This proves the main theorem on general symmetry-preservation in the TKPSVD.

\begin{theorem}
Let $\ten{A}$ be a general symmetric tensor with a $d$th-degree TKPSVD into cubical KP factors $A^{(i)}_j$. If Condition \ref{cond:distinctsigma} holds, then each of the $A^{(i)}_j$ factors in the TKPSVD is either a general symmetric or a general skew-symmetric tensor. There are always either zero or an even number of skew-symmetric factors in each term of \eqref{eq:tkpsvd}.
\label{theo:symA}
\end{theorem}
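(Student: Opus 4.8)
The plan is to pass from the algebraic characterization of general symmetry to a statement about the permuted tensor $\tilde{\ten{A}}$ and then squeeze the conclusion out of the orthogonality of the rank-1 terms. Writing $P = Q^T(P_d \otimes \cdots \otimes P_1)Q$ with $Q\,\textrm{vec}(\ten{A}) = \textrm{vec}(\tilde{\ten{A}})$, the hypothesis $P\,\textrm{vec}(\ten{A}) = \textrm{vec}(\ten{A})$ is equivalent to $(P_d \otimes \cdots \otimes P_1)\,\textrm{vec}(\tilde{\ten{A}}) = \textrm{vec}(\tilde{\ten{A}})$, which by \eqref{eq:modeprodvec} is exactly \eqref{eq:symAtilde}, i.e. $\tilde{\ten{A}} = \tilde{\ten{A}} \times_1 P_1 \times_2 \cdots \times_d P_d$. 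I would then substitute the orthogonal-rank-1 PD $\tilde{\ten{A}} = \sum_j \sigma_j\, a^{(1)}_j \circ \cdots \circ a^{(d)}_j$ into both sides; since a mode-$i$ product by $P_i$ acts only on the $i$th factor of each rank-1 term, the right-hand side becomes $\sum_j \sigma_j\,(P_1 a^{(1)}_j) \circ \cdots \circ (P_d a^{(d)}_j)$, which is \eqref{eq:symAtilde2}.

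Next, fix $k \in \{1,\ldots,R\}$ and contract both sides of \eqref{eq:symAtilde2} along every mode with $(a^{(i)}_k)^T$. On the left, orthogonality of the rank-1 terms together with $\|a^{(i)}_k\|_2 = 1$ collapses the sum to the single scalar $\sigma_k$; on the right one gets the bilinear-form expansion \eqref{eq:symAtilde3}, namely $\sigma_k = \sum_j \sigma_j \prod_{i=1}^{d} (a^{(i)}_k)^T P_i\, a^{(i)}_j$. Each scalar $(a^{(i)}_k)^T P_i\, a^{(i)}_j$ lies in $[-1,1]$ because $P_i$ is orthogonal and the vectors are unit norm, so each $d$-fold product lies in $[-1,1]$ as well. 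Invoking Condition \ref{cond:distinctsigma}, the $\sigma_j$ are pairwise distinct and all $j \neq k$ terms on the right vanish, so matching the coefficient of $\sigma_k$ forces $\prod_{i=1}^{d} (a^{(i)}_k)^T P_i\, a^{(i)}_k = 1$, which is \eqref{eq:equalcond}. A product of $d$ reals each in $[-1,1]$ can equal $1$ only if every factor is $\pm 1$, giving \eqref{eq:symorskew}.

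To finish, I would apply Lemma \ref{lemma:symA} with $a = a^{(i)}_k = \textrm{vec}(\ten{A}^{(i)}_k)$ and the permutation $P_i$: the value $+1$ means $\ten{A}^{(i)}_k$ is general symmetric, the value $-1$ means it is general skew-symmetric. Since the product of the signs is $+1$, the number of KP factors in the $k$th term carrying sign $-1$ — i.e. the number of general skew-symmetric factors — must be zero or even, which is the last assertion. I expect the main point requiring care to be the justification that each $P_i$ genuinely "corresponds with a general symmetry" of a factor-sized cubical tensor, so that Lemma \ref{lemma:symA} applies to the individual factors; this is where one uses that Definition \ref{def:gensym} requires the Kronecker decomposition \eqref{eq:permcond} to hold for the chosen degree $d$ and dimensions, so the $P_i$ are themselves admissible general-symmetry permutations. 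Everything else is bookkeeping plus the elementary observation about products of numbers bounded by $1$ in absolute value; no genuine estimate is involved.
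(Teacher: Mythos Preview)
Your proposal is correct and follows essentially the same route as the paper: derive \eqref{eq:symAtilde} from the Kronecker factorization of $P$, substitute the orthogonal PD to obtain \eqref{eq:symAtilde2}, contract against the $k$th rank-1 term to reach \eqref{eq:symAtilde3}, use Condition~\ref{cond:distinctsigma} to isolate \eqref{eq:equalcond}, and then the $[-1,1]$ bound plus Lemma~\ref{lemma:symA} finishes it. Your closing remark about verifying that each $P_i$ is itself an admissible general-symmetry permutation is a point the paper leaves implicit, but your reading of Definition~\ref{def:gensym} handles it.
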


Let us see what happens when Condition \ref{cond:distinctsigma} is not satisfied. Suppose that $\sigma_k=\sigma_{k+1}$ and that the terms corresponding with the other $\sigma_j$'s vanish. We can then combine the $\sigma_k$ and $\sigma_{k+1}$ terms such that now 
\begin{equation}
\prod_{i=1}^{d} (a^{(i)}_k)^T\,P_i\,a^{(i)}_k +  \prod_{i=1}^{d} (a^{(i)}_{k+1})^T\,P_i\,a^{(i)}_{k+1}\;=\; 1
\label{eq:equalcond2}
\end{equation}
needs to hold. Contrary to the case of distinct $\sigma_j$'s, there are now multiple ways that \eqref{eq:equalcond2} can be satisfied without $(a^{(i)}_k)^T\,P_i\,a^{(i)}_k = \pm 1$ being true, which implies that the terms corresponding with $\sigma_k,\sigma_{k+1}$ will not necessarily have general symmetric factors. A very particular case where Condition \ref{cond:distinctsigma} is not satisfied for all terms is for symmetric tensors of order $k > 2$. We discuss this case, along with other general symmetries, in Section \ref{sec:applications}.

\section{Numerical Experiments}
\label{sec:applications}
In this section we discuss numerical experiments that illustrate different aspects of the TKPSVD algorithm. We will demonstrate the structure preservation of generaly symmetries and shifted-index structures and discuss a curious observation for symmetric tensors. We also compare the use of the CPD with orthogonal matrix factors, the HOSVD and TTr1SVD in terms of runtime and storage. Finally, we illustrate how the Kronecker product structure can be interpreted as a multiresolution decomposition of images. All computations were done in Matlab on a 64-bit 4-core 3.3 GHz desktop computer with 16 GB RAM.

\subsection{General symmetric structure}
As a first example, we demonstrate the use of the CPD with orthogonal factor matrices, the HOSVD and the TTr1SVD to compute the TKPSVD, together with the preservation of centrosymmetry in the KP factors.
\begin{example}
\label{ex:ex1}
We construct a $24 \times 24  \times 24$ centrosymmetric tensor $\ten{A}$ with its distinct entries drawn from a standard normal distribution and compute a 3rd degree decomposition with factor sizes $4\times 4 \times 4, 3\times 3 \times 3, 2 \times 2 \times 2$ respectively. The reshaping and permutation steps result in a $8 \times 27 \times 64$ tensor $\tilde{\ten{A}}$. Table \ref{tab:ex1} compares the use of the CPD with orthogonal factor matrices, the HOSVD and TTr1SVD for the PD of $\tilde{\ten{A}}$. We list the total number of rank-1 terms, the required memory for storage of the PD of $\tilde{\ten{A}}$, the total runtime to compute the TKPSVD (computed as the median over 100 runs) and the relative error $ {||\ten{A}-\sum_{j=1}^R \sigma_j  \ten{A}^{(d)}_j \cdots \ten{A}^{(1)}_j ||_F}/{||\ten{A}||_F}$. The total number of rank-1 terms for the orthogonal CPD is limited to only 8, since the first factor matrix will be an orthogonal $8 \times 8$ matrix. This results in a large relative error. The runtime for computing the orthogonal CPD is also very long compared to the HOSVD and TTr1SVD. The main difference between the HOSVD and TTr1SVD lies in the total number of rank-1 terms. But since the HOSVD reuses the mode vectors, this results in slightly less required memory. All rank-1 terms computed with both the HOSVD and TTr1SVD retain the centroysymmetric structure. The TTr1SVD method results in 56 terms that have 2 skew-centrosymmetric factors, The HOSVD has 1792 such terms. Note that the HOSVD has a $8 \times 27 \times 64$ core tensor containing 6912 nonzero entries.

\begin{table}[ht]
\centering
\caption{Comparison of CPD, HOSVD and TTr1SVD for the TKPSVD.}%\vspace{3pt}
\label{tab:ex1} % is used to refer this table in the text
\begin{tabular}{@{}lrrrr@{}}
Method & \# Terms & Storage (kB) & Runtime (seconds) & Relative Error\\ \midrule
orthogonal CPD  & 8 & 3.14 & 768.07 & 0.947 \\
HOSVD & 6912 & 146.19 & 0.18 & 2.21e-15 \\
TTr1SVD & 216 & 154.06 & 0.21 & 2.39e-15 \\
\end{tabular}
\end{table}
\end{example}
Due to the limitations of the CPD with orthogonal factors, as demonstrated in Example \ref{ex:ex1}, we will refrain from using it for the following examples in this Section. We now demonstrate the occurrence of $\sigma$'s with multiplicities for symmetric tensors. Consequently, KP terms that belong to the same $\sigma$ will not inherit the general symmetry in their factors. This is true for when both the TTr1SVD and HOSVD are used. The TTr1SVD case however has a lot more regularity than the HOSVD. When the TKPSVD of a $k$-way symmetric tensor is computed with the TTr1SVD, then all multiple $\sigma$'s have a multiplicity of $k-1$. 
\begin{example}
Consider a symmetric $8 \times 8 \times 8$ tensor with distinct entries drawn from a standard normal distribution. We compute its degree-3 TKPSVD using the TTr1SVD and obtain 56 KP terms. For this $3$-way tensor, each multiple $\sigma$ has a multiplicity of $k-1=2$. There are 8 such pairs, which implies that 16 terms are not (skew)-symmetric. Next, we compute a degree-3 decomposition for an $8 \times 8$ symmetric matrix, which results in 14 KP terms. All the $\sigma$'s are distinct, which implies that all terms in the decomposition are (skew)-symmetric. Finally, we compute the degree-3 TKPSVD of a 4-way symmetric tensor. This decomposition consists of 230 terms. There are 20 3-tuples of multiple $\sigma$'s, which means that $60$ KP terms are not (skew)-symmetric.
\end{example}

\subsection{Shifted-index structure}
Next, we investigate the dependence of the total number of KP terms and total runtime on the ordering of the KP factors for both the TTr1SVD and HOSVD.
\begin{example}
Consider a $64 \times 64 \times 64 \times 64$ Hankel tensor with its distinct entries drawn from a standard normal distribution. We compute its TKPSVD into 3 Kronecker factors with dimensions $2 \times 2 \times 2 \times 2, 4 \times 4 \times 4 \times 4$ and $8 \times 8 \times 8 \times 8$ over all possible orderings of the factors and investigate the total number of obtained KP terms for both the TTr1SVD and HOSVD, together with total runtimes. The results are shown in Table \ref{tab:ex3}. The first thing to notice is that the total number of terms in the TKPSVD and total runtime are quite independent from the factor ordering when the HOSVD is used. On average, about 2000 terms are needed and the computation takes a little over 4 minutes. The TTr1SVD needs about 20 times less terms and is for one particular ordering more than 80 times faster. Note that although the HOSVD requires more terms, just like in Example \ref{ex:ex1} it will require less memory for storage due to the fact that it reuses the mode vectors for each KP term. All of the terms in every of the decompositions retained the Hankel structure.
\begin{table}[ht]
\begin{center}
\caption{Number of KP terms and total runtime for TTr1SVD and HOSVD.}
\label{tab:ex3}	
\begin{tabular}{@{}lcccc@{}}
		Ordering & \multicolumn{2}{c}{\# Terms} & \multicolumn{2}{c}{Runtime (seconds)} \\
\midrule
      & TTr1SVD & HOSVD & TTr1SVD & HOSVD \\ \midrule
2,4,8 & 65 & 1968 & 9.76  & 254.30 \\
2,8,4 & 65 & 1973 & 2.47  & 251.70\\
4,2,8 & 65 & 1993 & 5.80  & 254.96\\
4,8,2 & 65 & 2146 & 5.75  & 256.41\\
8,2,4 & 145& 2067 & 247.66& 249.97\\
8,4,2 & 145& 2105 & 239.95& 254.16\\
\end{tabular}
\end{center}
\end{table}

\end{example}

\begin{comment}
\subsection{Diagonal tensor}
\begin{example}
Consider the $64 \times 64$ diagonal matrix $A$ with diagonal entries $1,2,\ldots,64$, for which we compute a degree $3$ decomposition. Each factor is hence a diagonal $4 \times 4$ matrix. We construct the vector $a$ with 64 entries $A_{i_1i_1}$ and reshape it into the $4 \times 4 \times 4$ tensor $\ten{A}$. The TTr1 decomposition of $\ten{A}$ consists of 4 terms with coefficients $298.54,17.14,4.34,0.80$. The largest term in the Kronecker product decomposition is
$$
298.54 \; \textrm{diag}(\begin{pmatrix}-0.4824\\-0.4940\\-0.5056\\-0.5173\end{pmatrix})\otimes 
\textrm{diag}(\begin{pmatrix}-0.4266\\-0.4737\\-0.5208\\-0.5679\end{pmatrix})\otimes 
\textrm{diag}(\begin{pmatrix} 0.1199\\0.3330\\0.5461\\0.7592 \end{pmatrix}) ,
$$
where $\textrm{diag}(b)$ is a diagonal matrix with diagonal entries as given in the vector $b$.
\end{example}
\end{comment}

\subsection{Multiresolution decomposition of images}
\begin{figure}[ht]
\centering
\includegraphics[width=.75\textwidth]{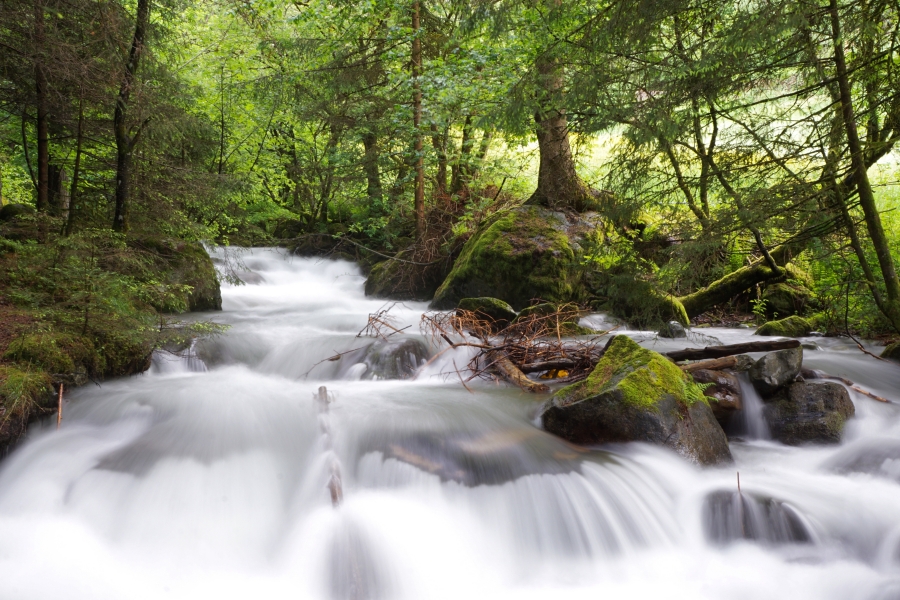}
\caption{Original $4000 \times 6000 \times 3$ image.}
\label{fig:mountainlake}
\end{figure}
An interesting illustration of the TKPSVD is in the multiresolution decomposition of a $n_1 \times n_2 \times 3$ colour image $\ten{A}$. This example gives an interpretation to two different aspects of the TKPSVD: truncation of the Kronecker product and truncation of the number of terms. Indeed, every pixel of the $n_1^{(d)} \times n_2^{(d)} \times 3$ image $\ten{A}^{(d)}$ is ``blown up'' by each Kronecker product in $\ten{A}^{(d)} \otimes \ten{A}^{(d-1)} \otimes \cdots \otimes \ten{A}^{(1)}$ until the resolution $n_1 \times n_2 \times 3$ is obtained. Truncating the Kronecker products to only a few factors hence effectively reduces the resolution. Furthermore, compression can be achieved at different resolutions by retaining only a few terms. The compression rate achieved for retaining $k$ factors and $r$ number of terms in \eqref{eq:tkpsvd} is defined as
$$
\frac{\prod_{i=0}^{k-1} n^{(d-i)}_1 \, n_2^{(d-i)} \,n_3^{(d-1)}}{r \sum_{i=0}^{k-1}  n^{(d-i)}_1 \, n_2^{(d-i)} \,n_3^{(d-1)} }.
$$
This is illustrated with the $6000\times 4000 \times 3$ colour image in Figure~\ref{fig:mountainlake}\footnote{\url{absfreepic.com/free-photos/download/water-nature-fall-6000x4000_90673.html}}. A degree-5 TKPSVD is computed with dimensions
$$(250 \times 375 \times 3)\otimes (2 \times 2 \times 1)\otimes (2 \times 2 \times 1) \otimes (2\times 2 \times 1) \otimes (2 \times 2 \times 1).
$$
The total runtime to compute the TKPSVD using the TTr1SVD and HOSVD was 18 and 30 seconds respectively.  In contrast, computing a standard SVD of one of the three slices $\ten{A}(:,:,i)$ takes about 1 minute and consists of 4000 rank-1 terms. The TTr1SVD needs 240 terms while the HOSVD needs 65536. For this particular example, the compression rate when retaining $k$ factors and $r$ terms can be approximated by
$$
\frac{(250\times 375 \times 3)\,(2\times 2 \times 1)^{k-1} }{r (250\times 375 \times 3+(k-1)( 2\times 2 \times 1)) } \approx \frac{(2\times 2 \times 1)^{k-1}}{r} = \frac{4^{k-1}}{r}.
$$
This implies that the maximal compression rate, when $r=1$, is dependent on the resolution, viz. the number of KP factors $k$ in each term. At the largest resolution $(k=5)$, the maximal compression rate is approximately $256$ while at the smallest resolution no compression is possible through truncation of KP terms. A common measure to quantify the quality of reconstruction of lossy compressed images is the peak signal-to-noise ratio (PSNR). The PSNR is defined as
$$
PSNR = 20 \log_{10}(\textrm{MAX}_\textrm{I}) - 10 \log_{10}(\textrm{MSE})
$$
where $\textrm{MAX}_{\textrm{I}}$ is the maximal possible pixel value, 255 in our case, and MSE is the mean squared error $||\ten{A}-\hat{\ten{A}}||^2_F / (n_1\cdot n_2 \cdot 3)$. Figure \ref{fig:PSNR} shows the PSNR as a function of the number of retained KP terms, computed from the TTr1SVD for the highest possible resolution. In this case, the PSNR can be completely determined from the $\sigma$'s in the TKPSVD using \eqref{eq:approxerror}. Acceptable values of the PSNR are between 30 and 50 dB and are obtained from retaining the first 20 terms, which corresponds with a compression rate of about $12.8$. Figure \ref{fig:multiscale} displays 1-term approximants for 3 different resolutions. For Figures \ref{fig:multiscale}(a),(b),(c) the PSNR is $58 \textrm{dB}, 58 \textrm{dB}$ and $57 \textrm{dB}$ respectively.

\begin{figure}[ht]
\centering
\includegraphics[width=.75\textwidth]{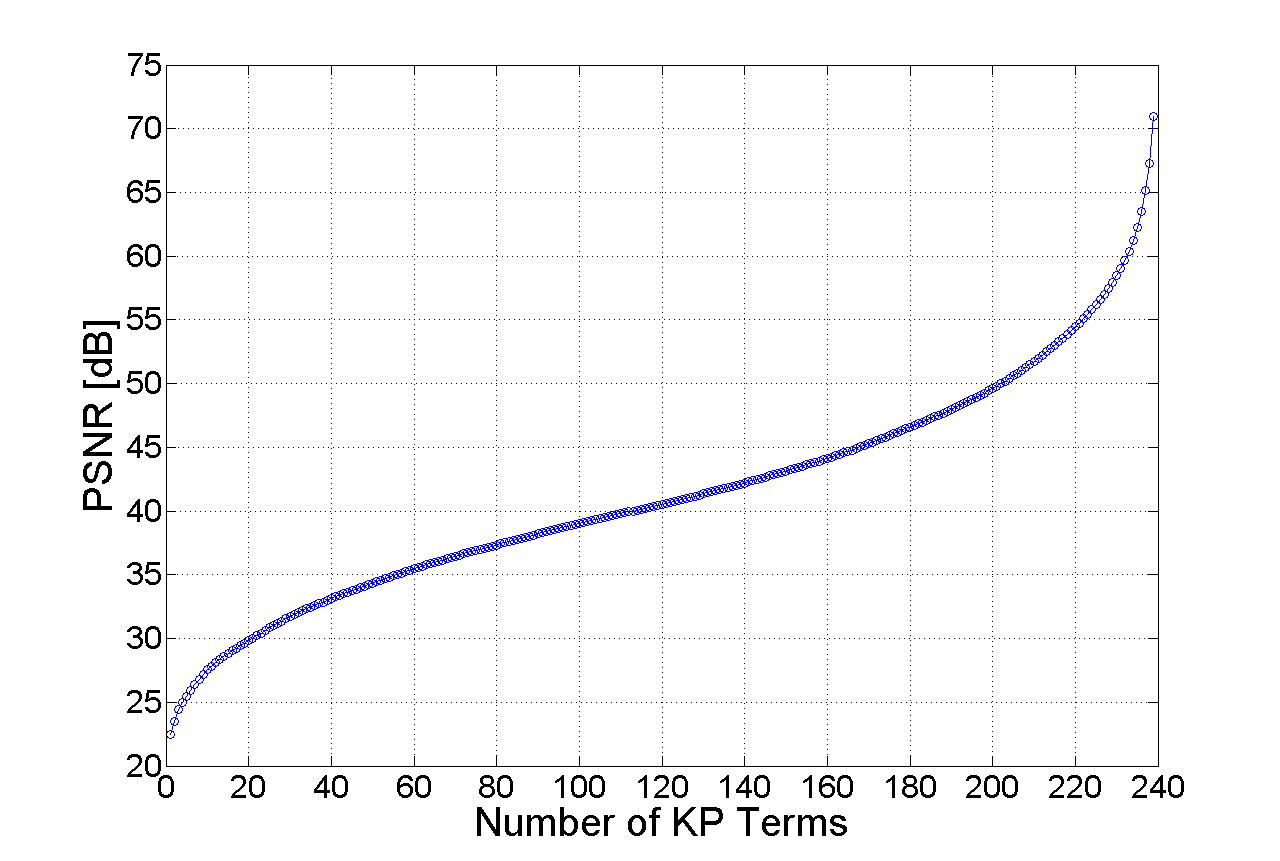}
\caption{Increase of the PSNR as the number of TTr1SVD-KP terms grows for the $4000\times 6000 \times 3$ resolution.}
\label{fig:PSNR}
\end{figure}

\begin{comment}
All possible resolutions, together with the time in seconds required to compute 1 KP term at each resolution are listed in Table \ref{tab:multiscale}. These computation times are found as the median over computing the first 100 terms at each resolution. The KP decomposition results in a total of 6400 KP terms. However, not all 6400 terms are required to approximate the image sufficiently. 
\begin{table}[ht]
\centering
\caption{Median time to compute 1 KP term at different resolutions.}%\vspace{3pt}
\label{tab:multiscale} % is used to refer this table in the text
\begin{tabular}{@{}ll@{}}
Resolution &  KP term [seconds]\\\midrule   
$50\times 75$     & $3.09\times 10^{-7}$\\ 
$100\times 150$   & $1.81\times 10^{-4}$\\
$200\times 300$   & $8.32\times 10^{-4}$\\
$400\times 600$   & $3.40\times 10^{-3}$\\
$800\times 1200$  & $1.40\times 10^{-2}$\\
$4000\times 6000$ & $1.41\times 10^{-1}$  
\end{tabular}
\end{table}
\end{comment}
\begin{figure}[th]
    \centering
    \begin{subfigure}[b]{0.2\textwidth}
        \centering
        \includegraphics[width=\textwidth]{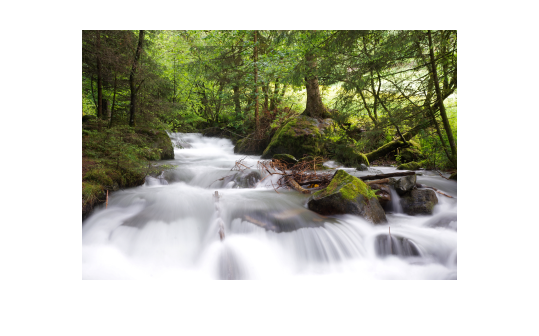}
        \caption{$250 \times 375$}
        \label{fig:100x150}
    \end{subfigure}
 %   \hfill
    \begin{subfigure}[b]{0.4\textwidth}
        \centering
        \includegraphics[width=\textwidth]{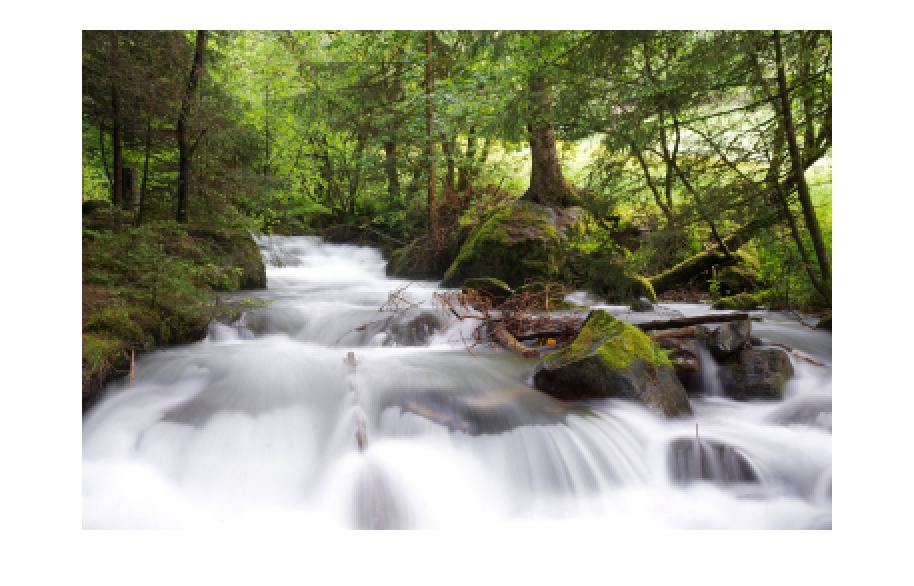}
        \caption{$500 \times 750$}
        \label{fig:400x600}
    \end{subfigure}
 %   \hfill
    \begin{subfigure}[b]{0.8\textwidth}
        \centering
        \includegraphics[width=\textwidth]{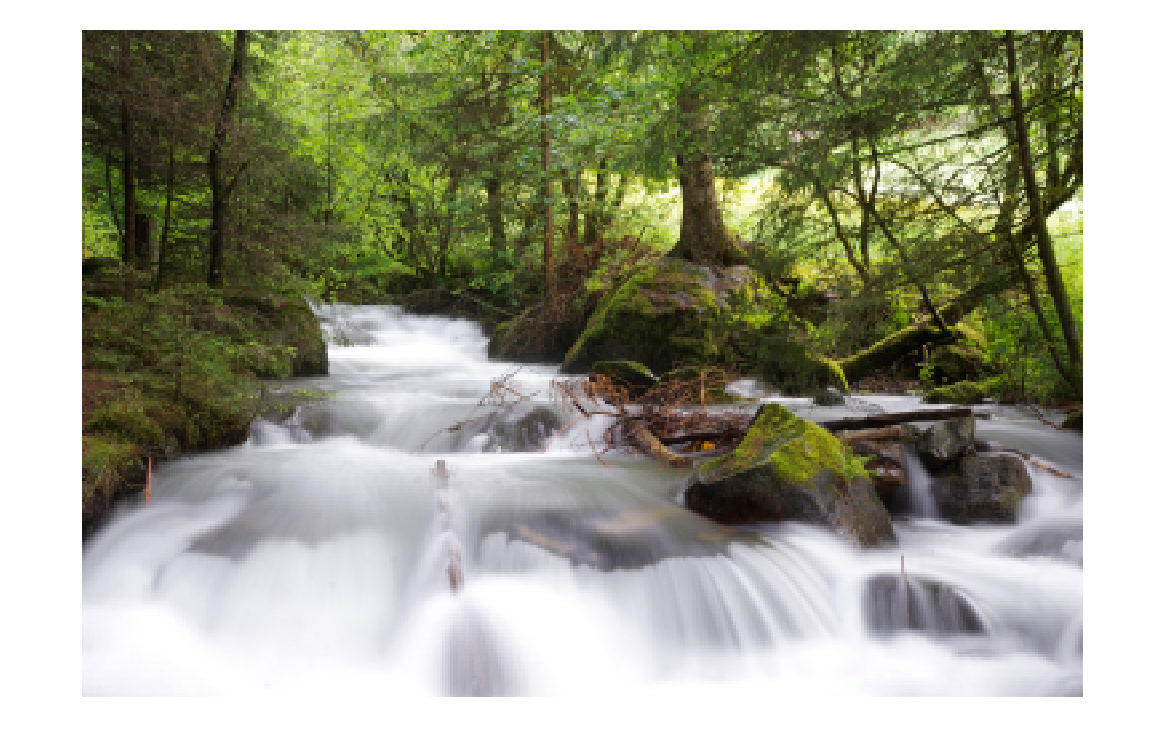}
        \caption{$1000 \times 1500$}
        \label{fig:800x1200}
    \end{subfigure}
    \caption{First term of the KP decomposition for 3 different resolutions.}
    \label{fig:multiscale}
\end{figure}

\section{Conclusions}
\label{sec:conclusions}
In this paper, we introduced the tensor Kronecker product singular value decomposition that decomposes a real $k$-way tensor $\ten{A}$ into a linear combination of tensor Kronecker product terms with an arbitrary number of $d$ factors $\ten{A} = \sum_{j=1}^R \sigma_j\, \ten{A}^{(d)}_j \otimes \cdots \otimes \ten{A}^{(1)}_j$. This decomposition enables easy computation of a Kronecker product approximation and a very straightforward determination of the relative approximation error without explicit construction of the approximant. We proved that for many different structured tensors, the Kronecker product factors $\ten{A}^{(1)}_j,\ldots,\ten{A}^{(d)}_j$ are guaranteed to inherit this structure. In addition, we introduced the new framework of general symmetric tensors, which includes many different structures such as symmetric, persymmetric, centrosymmetric, Toeplitz and Hankel tensors. 

\section*{Acknowledgements}
The authors would like to thank Martijn Bouss\'e and Nico Vervliet for their invaluable help on computing a CPD with orthogonal factor matrices in Tensorlab.
 
 \bibliographystyle{siam}
% argument is your BibTeX string definitions and bibliography database(s)
 \bibliography{references.bib}

\end{document}